\documentclass[12pt]{amsart}

\usepackage{amsmath, amssymb, amsthm}
\usepackage{mathtools}
\usepackage{hyperref}
\usepackage{enumitem}
\usepackage{geometry}
\usepackage{xcolor}
\newtheorem{theorem}{Theorem}[section]
\newtheorem{proposition}[theorem]{Proposition}

\newtheorem{corollary}[theorem]{Corollary}
\newtheorem{definition}[theorem]{Definition}
\newtheorem{remark}[theorem]{Remark}

\newtheorem*{ack}{Acknowledgement}

\newcommand{\M}{\mathcal{M}}
\newcommand{\CW}{\mathrm{CW}}

\newcommand{\cW}{\mathcal{W}}
\newcommand{\Wim}{\mathcal{W}^{\mathrm{im}}}
\newcommand{\bR}{\mathbb{R}}

\newcommand{\eps}{\varepsilon}
\newcommand{\CE}{\mathrm{CE}}

\title[Generation of immersed Lagrangians by cocores]{Generation of immersed Lagrangians by cocores}

\author{Wonbo Jeong}
\address[Wonbo Jeong]{Department of Mathematics and Center for Nano Materials\\ Sogang University\\ 35 Baekbeom-ro\\ Mapo-gu\\ Seoul 04107\\ Republic of Korea}
\email{wonbo.jeong@gmail.com}

\author{Dogancan Karabas}
\address[Dogancan Karabas]{Undergraduate Program, Temple University Japan Campus, Tokyo, Japan}
\email{dogancan.karabas@tuj.temple.edu}

\author{Sangjin Lee}
\address[Sangjin Lee]{Korea Institute for Advanced Study, 85 Hoegiro Dongdaemun-gu, Seoul 02455, Republic of Korea}
\email{sangjinlee@kias.re.kr}

\begin{document}
	
\begin{abstract}
	We extend the generation theorem of Chantraine--Dimitroglou
	Rizell--Ghiggini--Golovko to exact Lagrangian immersions in Weinstein
	manifolds. We prove that an exact Lagrangian immersion equipped with an augmentation of the Chekanov--Eliashberg algebra of its Legendrian
	lift, or equivalently, equipped with a corresponding bounding cochain, is generated by the Lagrangian cocores.
\end{abstract}

\maketitle

\tableofcontents

\section{Introduction}
\label{sec:introduction}

Generation by standard Lagrangian objects is a fundamental structural result
for Fukaya categories. The goal of this paper is to prove such a
generation result for exact Lagrangian immersions with bounding cochains
supported on positive-action double points. Equivalently, via the Legendrian
lift, these are exact Lagrangian immersions equipped with augmentations of the
corresponding Chekanov--Eliashberg algebras.

The main result is the following.

\begin{theorem}[Theorem \ref{thm:immersed-generation}, Corollary \ref{cor:embedded-fukaya}]
	\label{thm:intro-generation}
	Let $L$ be an exact Lagrangian immersion in a Weinstein manifold $W$ of
	finite type, equipped with a bounding cochain $b$ supported on the
	positive-action double points of $L$, or equivalently with an
	augmentation of the Chekanov--Eliashberg algebra of its Legendrian lift.
	Then $(L,b)$ is generated by the Lagrangian cocores of $W$ in the
	immersed wrapped Fukaya category $\Wim(W)$ of \cite{gao17}. Consequently,
	$(L,b)$ belongs to the triangulated closure $\mathrm{Tw}\,\cW(W)$ of the
	embedded wrapped Fukaya category $\cW(W)$.
\end{theorem}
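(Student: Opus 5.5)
The plan is to adapt the Chantraine--Dimitroglou Rizell--Ghiggini--Golovko (CDGG) argument, which resolves the unit of $CW^*(L,L)$ through the cocores, to the immersed setting, with the bounding cochain $b$ deforming every operation.

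\textbf{Step 1: choose the geometry.} Take a Weinstein handle decomposition of $W$ with Lagrangian cocores $D_1,\dots,D_k$ and skeleton $\mathrm{Sk}(W)$. After a compactly supported Hamiltonian isotopy of the immersion $L$ (and the matching transport of $b$, which gives a quasi-isomorphic object of $\Wim(W)$ by invariance from \cite{gao17}), arrange three things. First, $L$ meets the top-dimensional part of $\mathrm{Sk}(W)$ transversely at finitely many points, each of which is an intersection of $L$ with a cocore $D_i$. Second, all double points of $L$ lie away from the skeleton, in the region where the Liouville flow is close to the cylindrical end. Third, under the negative Liouville flow $\phi^{-t}$, the image $\phi^{-t}(L)$ concentrates near the skeleton and looks like a union of parallel copies of the cocores $D_i$ near the intersection points, joined by thin ``necks''. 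The double points keep their action up to rescaling by $e^{-t}$, so $b$ remains supported on positive-action double points.

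\textbf{Step 2: build the resolution.} Following CDGG, view $\phi^{-t}(L)$ as obtained by iterated Lagrangian surgery, or more precisely as a Lagrangian cobordism, from the collection of cocore copies $D_i$ together with a piece that is displaceable, or becomes empty, in the cylindrical end. From this, write down a twisted complex $T = \bigl(\bigoplus_{p\in L\pitchfork D} D_{i(p)}[\ast],\ \delta\bigr)$ in $\mathrm{Tw}\,\cW(W)$. The differential $\delta$ counts holomorphic strips and polygons between cocore copies. These counts are weighted by insertions of $b$ at the double points of $L$, or equivalently are obtained by composing with the augmentation $\varepsilon$ of $\CE$ of the Legendrian lift.

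Construct morphisms $f\colon (L,b)\to T$ and $g\colon T\to(L,b)$ from the intersection generators $CW^*(L,D_i)$ and $CW^*(D_i,L)$, with $b$-deformed $\mu^1$. Then verify that $g\circ f$ is homotopic to the unit of $CW^*((L,b),(L,b))$. Do this by an action-filtration argument: after enough negative flow, the only low-action contributions to $\mu^2_b(g,f)$ are the small triangles near the intersection points. These reproduce the minimum-type representative of the unit.

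\textbf{Step 3: control the curves that touch $b$.} I expect this to be the main obstacle. The $b$-deformed operations count polygons with arbitrarily many corners at double points, so we must show that these do not destroy the filtration argument. Two facts do the work:
\begin{itemize}
\item Since $b$ is supported on positive-action double points, every insertion of $b$ strictly increases energy.
\item Since the double points stay outside a neighbourhood of the skeleton, any polygon reaching them must cross a region of large symplectic area relative to the shrinking scale $e^{-t}$.
\end{itemize}
Together with the Maurer--Cartan equation, which (via the $\CE$/augmentation dictionary) makes the $b$-deformed $\mu^1$ square to zero, this should give a filtered-module comparison. The associated graded piece equals the undeformed CDGG computation, and a spectral sequence argument then yields that the unit factors through $T$.

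If the direct energy estimate fails, I would instead work on the Legendrian side. Regard $L$ as the Legendrian lift $\Lambda\subset W\times\mathbb{R}$, and use the augmentation to build the corresponding embedded exact filling-type object from the Ekholm--Honda--K\'alm\'an functoriality for augmentations. Then run the CDGG stop-removal argument there.

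\textbf{Step 4: conclude.} This shows $(L,b)$ is a retract of $T$, and split-generation upgrades to generation because the CDGG resolution produces an honest twisted complex. Finally, $\cW(W)$ embeds fully faithfully into $\Wim(W)$ (again from \cite{gao17}), and the cocores lie in $\cW(W)$. Hence $(L,b)$ lies in $\mathrm{Tw}\,\cW(W)$, which is the stated corollary.
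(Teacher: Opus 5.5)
Your proposal has a genuine gap exactly where the immersed case differs from the embedded one: you never connect the category in which the theorem is stated to the model in which you compute.

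In Gao's $\Wim(W)$ the $b$-deformed operations are \emph{not} counts of polygons with corners at double points weighted by $b$. They count rigid stable pearly tree maps and broken Floer trajectories, with pearly trees (including Morse flow lines on $L\times_i L$) attached along the boundary. Everything you want from CDRGG (surgery, twisted complexes with augmentation-weighted entries, vanishing of objects disjoint from the skeleton) is proved in their Hamiltonian-perturbed, augmentation-weighted model. CDRGG explicitly do not assemble that model into an $A_\infty$-category for immersed objects.

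The missing step is a comparison theorem. When $b$ is supported on positive-action double points, the energy inequality forces every flow line in a contributing configuration to end at a positive-action point of the discrete set $S(L,i)$, so it is constant. Additivity of the degree constraint over the resulting nodal components then forces a rigid configuration to be a single disk (resp.\ strip, polygon). Combined with the bijection $b\leftrightarrow\eps_b$ between positive-action Maurer--Cartan elements and augmentations of $\CE(L^+)$, this identifies $m_d^{\mathrm{Gao}}$ with $m_d^{\mathrm{CDRGG}}$. Your sentence ``the $b$-deformed operations count polygons with arbitrarily many corners at double points'' silently assumes this identification.

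The generation mechanism of Steps 2--3 also fails as written. The negative Liouville flow is a conformal symplectomorphism preserving the conical cocores. It rescales all actions and areas by the same factor $e^{-t}$, for curves through double points and for the small triangles alike, and it moves the double points into every neighbourhood of the skeleton. So the energy separation your filtration and spectral-sequence argument relies on does not exist, and the claim that the small triangles reproduce the unit is unsupported. Moreover, $g\circ f\simeq\mathrm{id}$ would only exhibit $(L,b)$ as a retract of $T$, i.e.\ split-generation. The statement places $(L,b)$ in $\mathrm{Tw}\,\cW(W)$ itself, and retracts of an honest twisted complex need not be twisted complexes.

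What gives honest generation is the CDRGG surgery argument applied to the union $\mathbb{L}=D_{a_1}\cup\cdots\cup D_{a_k}\cup L$, where the $a_i$ are the intersection points of $L$ with the skeleton. The surgery $\mathbb{L}(a_1,\dots,a_k)$ misses the skeleton, hence is zero. It is quasi-isomorphic to a twisted complex built from the $D_{a_i}$ and $(L,\eps_b)$, provided $\CE(\mathbb{L}^+)$ has an augmentation that restricts to $\eps_b$ on $\CE(L^+)$, equals $1$ on the chords $\bar a_i$, and vanishes on order-reversing chords.

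Constructing that augmentation is the new immersed input. The bottom of the filtration of $\CE(\mathbb{L}^+)/\mathcal{I}$ is now $\CE(L^+)$ rather than the ground field. The coefficients $\alpha^m_j,\beta^{mh}_{jl}$ in $d\bar b^m_{ij}$ lie in $\CE(L^+)$. One defines $\eps(\bar c^m_{ij})$ inductively through $\eps(\alpha^m_j)$ and checks $\eps\circ d=0$ using $d^2\bar b^m_{ij}=0$. Your differential $\delta$ on $T$ is asserted but its twisted-complex equation is never verified, and that verification is exactly this construction.
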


Generation results of this kind go back to Abouzaid's theorem \cite{abo11}, stating that a cotangent
fiber generates the wrapped Fukaya category of a cotangent bundle. For
embedded exact Lagrangians in general Weinstein manifolds, the corresponding
generation theorem was proved by
Chantraine--Dimitroglou Rizell--Ghiggini--Golovko \cite{cdrgg17}, and later
by Ganatra--Pardon--Shende \cite{gps2}: they are generated
by the Lagrangian cocores, which can be seen as generalization of cotangent fibers.

The present paper extends this embedded generation result to exact Lagrangian
immersions equipped with positive-action bounding cochains. Immersed Floer
theory has been developed in several related directions: Akaho--Joyce
\cite{Akaho-Joyce10} constructed an immersed Lagrangian Floer theory for
compact symplectic manifolds, Alston--Bao \cite{als-bao18} studied immersed compact Lagrangians in exact symplectic manifolds, and Gao \cite{gao17}
developed an immersed wrapped Floer theory for possibly noncompact exact
Lagrangians in Liouville manifolds using pearly-tree models.

When extending the generation argument of \cite{cdrgg17} to the immersed
setting, one encounters an additional categorical issue. Although
\cite{cdrgg17} defines Floer complexes and Hamiltonian-perturbed operations
for augmented immersed Lagrangians, it does not construct a complete
$A_\infty$-category whose objects are immersed Lagrangians; the authors note
that the required coherent Hamiltonian perturbations have not been verified
\cite[Section~6.2]{cdrgg17}. Thus, their generation argument cannot be applied
directly as a statement inside an immersed wrapped Fukaya category.

We resolve this by working in Gao's immersed wrapped Fukaya category
$\Wim(W)$ \cite{gao17}, which is a well-defined $A_\infty$-category whose
objects are exact Lagrangian immersions equipped with bounding cochains. The
operations in $\Wim(W)$ are defined using stable pearly tree maps, while
the operations in \cite{cdrgg17} are defined by Hamiltonian-perturbed
holomorphic polygons with boundary punctures weighted by augmentation values.
The key point of the paper is that, for bounding cochains supported on
positive-action double points (see Definition \ref{def:bounding-cochain-positive}), these two descriptions agree. More precisely,
the pearly-tree decorations in \cite{gao17} degenerate to the
corresponding Hamiltonian-perturbed polygon counts, and the coefficients of
the bounding cochains agree with the augmentation weights under the natural
correspondence between augmentations and bounding cochains supported on
positive-action double points. Related comparisons between Hamiltonian-perturbed and pearly models of Floer
theory in non-wrapped settings appear in the work of
Alston--Bao \cite{als-bao21} and Zhang \cite{zha25}.

The comparison established in this paper allows us to import the generation argument of
\cite{cdrgg17}, with the necessary modifications, into the well-defined
category $\Wim(W)$. The result is that exact Lagrangian immersions equipped
with positive-action bounding cochains are generated by the Lagrangian
cocores. Consequently, such objects lie in the triangulated closure of the
embedded wrapped Fukaya category inside the immersed wrapped Fukaya category.

The paper is organized as follows. In Section~\ref{subsec:setup}, we recall
basic definitions concerning Weinstein manifolds and exact Lagrangian
immersions with cylindrical end. We also introduce the wrapped Floer
$A_{\infty}$-algebra $\CW^*(L)$ of \cite{gao17}, the wrapped Floer cochain complexes
$\CW^*(L_0,L_1)$, our action conventions for double-point and Reeb chord
generators, and the basic energy inequality used throughout the paper.

In Section~\ref{subsec:rigidity}, we prove the analytic degeneration results
which drive the comparison. First, we show that if all inputs are
positive-action double-point generators, then any rigid stable pearly tree map
contributing to the operations on $\CW^*(L)$ degenerates to a single
holomorphic disk (Theorem~\ref{thm:pearly-single-disk}). We then prove the
corresponding statement for the bimodule operations of \cite{gao17}: any rigid stable broken
Floer trajectory with positive-action double-point inputs degenerates to a
single holomorphic strip with boundary marked points
(Theorem~\ref{thm:bimod-single-strip}).

In Section~\ref{subsec:bc-aug}, we relate the two algebraic inputs. We show
that bounding cochains supported on the positive-action double-point
generators of $\CW^*(L)$ are in natural bijection with augmentations of the
Chekanov--Eliashberg algebra $\CE(L^+)$ of the Legendrian lift
$L^+\subset W\times \bR$ (Proposition~\ref{prop:bc-aug}).

In Section~\ref{subsec:comparison}, we compare the two Floer-theoretic
frameworks. Using the degeneration results and the bounding
cochain--augmentation correspondence, we prove that the $A_\infty$-operations of \cite{gao17}
agree with the corresponding Hamiltonian-perturbed $A_{\infty}$-operations of
\cite{cdrgg17} for the objects considered in this paper
(Theorem~\ref{thm:floer-equivalence}). This gives a well-defined
Hamiltonian-perturbed interpretation of the relevant full subcategory of
$\Wim(W)$ (Corollary~\ref{cor:immersed-fukaya}).

Finally, in Section~\ref{subsec:generation}, we adapt the generation argument
of \cite{cdrgg17} to $\Wim(W)$ and prove Theorem~\ref{thm:intro-generation}
as Theorem~\ref{thm:immersed-generation}. As a consequence, an exact
Lagrangian immersion equipped with a positive-action bounding cochain belongs
to the triangulated closure $\mathrm{Tw}\,\cW(W)$ of the embedded wrapped
Fukaya category (Corollary~\ref{cor:embedded-fukaya}).

\begin{ack}
	During this work, Wonbo Jeong was supported by the National Research Foundation of Korea (NRF) grants funded by the Korean government (MIST, No. RS-2025-24803252) and through the G-LAMP program (MOE, RS-2024-00441954).
	Part of this work was carried out while Dogancan Karabas was at the Kavli Institute for the Physics and Mathematics of the Universe (Kavli IPMU), the University of Tokyo, where the author was supported by the World Premier International Research Center Initiative (WPI), MEXT, Japan. Further parts of this work were carried out at the Kyoto University Institute for Advanced Study (KUIAS), in the Hiraoka Laboratory, and at Temple University, Japan Campus. 
	Sangjin Lee is supported by a KIAS Individual Grant (MG094401) at Korea Institute for Advanced Study.
\end{ack}

\section{Setup}
\label{subsec:setup}

We first recall the definitions for Liouville and Weinstein manifolds. A
\emph{Liouville manifold} is a pair $(W,\theta)$ such that $W$ is a $2n$-dimensional smooth manifold, $\omega=d\theta$ is a symplectic form, and the Liouville vector field $Z$, defined by
\[
\iota_Z\omega=\theta,
\]
generates a complete expanding flow. A \emph{Weinstein manifold} is a
Liouville manifold $(W,\theta)$ equipped with an exhausting Morse function
$\phi\colon W\to \mathbb R$ such that the Liouville vector field $Z$ is
gradient-like for $\phi$. We say that $W$ is of \emph{finite type} if $\phi$
has finitely many critical points. Throughout the paper, all Weinstein
manifolds are assumed to be of finite type.

We now fix a compact Weinstein domain $W_0$ whose completion is $W$. In other
words,
$W_0$ is a compact exact symplectic manifold with boundary, the Liouville
vector field points outward along $\partial W_0$, and the Liouville flow
identifies the complement of the interior of $W_0$ with the positive
symplectization
\[
\partial W_0 \times [1,\infty).
\]
On this end, we write $r$ for the radial coordinate and have
\[
\theta = r\alpha,
\qquad
\alpha := \theta|_{\partial W_0}.
\]
Hence
\[
W = W_0 \cup_{\partial W_0}
\bigl(\partial W_0\times [1,\infty)\bigr).
\]
We call $W_0$ the \emph{interior part} of $W$ and
$\partial W_0\times [1,\infty)$ the \emph{cylindrical end}. The contact
manifold $\partial W_0$, equipped with the contact form $\alpha$, is denoted
by $\partial_\infty W$ and called the \emph{ideal contact boundary} of $W$.

The \emph{skeleton}, or \emph{core}, of $W$ is defined by
\[
\operatorname{Skel}(W,\theta)
:=
\bigcap_{t>0} Z^{-t}(W_0),
\]
where $Z^t$ denotes the time-$t$ flow of the Liouville vector field. Thus the
skeleton consists of the points which do not escape to the cylindrical end
under the positive Liouville flow. The \emph{(Lagrangian) cocores} of $W$ are the properly embedded exact Lagrangian
disks dual to the $n$-dimensional strata of the skeleton. Locally, near
a smooth point of such a stratum, a cocore is modeled on a cotangent fiber
dual to the stratum.

An \emph{exact Lagrangian immersion (with cylindrical end)} is a proper
immersion $i \colon L \rightarrow W$ of an $n$-dimensional manifold
such that $i^*\theta = df$ for a smooth function $f \colon L \to \bR$
called the \emph{potential} (or the \emph{primitive function}). We assume that the immersion is an embedding outside
finitely many points, all self-intersection points are transverse double
points, and $i(L)$ is tangent to the
Liouville vector field $Z$ on the cylindrical end of $W$. We sometimes refer to $i(L)$
by $L$ by abuse of notation and call it an immersed Lagrangian. We assume throughout that each double point $x$ of $L$ with preimage $i^{-1}(x) = \{x^+, x^-\}$ satisfies
\[
  f(x^+) > f(x^-).
\]

\begin{remark}\label{rmk:nice-immersion}
	Following \cite{cdrgg17}, we call an exact Lagrangian immersion 
	\emph{nice} if $f(x^+) \neq f(x^-)$ at every double point $x$. 
	Every exact Lagrangian immersion can be perturbed by a Lagrangian 
	isotopy to a nice one, and the convention 
	$f(x^+) > f(x^-)$ is then achieved by labeling the two preimages 
	accordingly.
\end{remark}


Following \cite{gao17}, we denote by $\CW^*(L)$ the wrapped Floer
$A_{\infty}$-algebra associated to the immersed Lagrangian $i\colon L\to W$ and a
fixed Hamiltonian $H$ quadratic at infinity, meaning that
\[
H=r^2
\]
on the cylindrical end of $W$. It is the free module
generated by the following two kinds of generators:
The first kind consists of critical points of an auxiliary Morse function on
the fiber product
\[
L \times_i L
=
\{(p,q)\in L\times L \mid i(p)=i(q)\}
=
\Delta_L
\sqcup
S(L,i),
\]
where
\[
S(L,i):=\{(p,q)\in L\times L \mid p\neq q,\ i(p)=i(q)\}.
\]
In \cite{gao17}, these generators are equipped with capping half-disks.
We suppress this capping data from the notation, since the arguments in this
paper do not depend on the choice of capping data. The second kind consists of
non-constant time-one Hamiltonian chords of $H$.

Concretely, the generators of $\CW^*(L)$ fall into three types:
\begin{itemize}
	\item \emph{Morse generators}: critical points on the diagonal
	$\Delta_L \cong L$, which correspond to constant chords at Morse
	critical points of $L$,
	\item \emph{double-point generators}: since $S(L,i)$ is discrete, every point is a critical point,
	corresponding to a double point of $L$; each double point $x$
	contributes two generators
	\[
	x_+ := (x^+, x^-), \qquad x_- := (x^-, x^+).
	\]
	We define their \emph{actions} by
	\[
	a(x_+) := f(x^+) - f(x^-), \qquad
	a(x_-) := f(x^-) - f(x^+).
	\]
	By our convention $f(x^+)>f(x^-)$, the generator $x_+$ has positive
	action and $x_-$ has negative action.
	
	\item \emph{Reeb chord generators}: non-constant chords, which
	correspond to Reeb chords of the Legendrian $\partial_\infty L$
	on the ideal contact boundary $\partial_\infty W$. If the corresponding time-one Hamiltonian chord $x$
	lies in $\partial W_0\times\{r\}$ on the cylindrical end, we define its \emph{action} by
	\[
	a(x):=-r^2.
	\]
	Thus Reeb chord generators have negative action; see also the passage following
	\cite[Lemma~3.3]{gao17} and the proof of \cite[Lemma~7.23]{gao17}.
\end{itemize}

	Throughout, we assume that each immersed Lagrangian $i \colon L \to W$ 
	is graded. The chosen 
	grading determines a grading for each generator of $\CW^*(L)$; 
	in particular, the degree of each double-point generator $x_\pm$ and 
	each Reeb chord generator is their Maslov index, and the degree of a 
	Morse generator is its Morse index. We refer to \cite[Section~4.2]{gao17} 
	for the precise definitions.

Consider a pseudoholomorphic disk $u$ with boundary on $i(L)$ such that
the boundary of $u$ admits a continuous lift to $L$ away from its boundary
punctures. Assume that at each boundary puncture, the map is asymptotic either
to a point of $i(L)$, which may be a double point of the immersion, or to a
Reeb chord.

We distinguish one boundary puncture as the output and regard the remaining
boundary punctures as inputs. If a puncture is asymptotic to a double point of
the immersion, then the corresponding ordered branch jump is determined by
this input-output convention: at the output puncture, it is read in the
direction induced by the boundary orientation, while at an input puncture, it
is read in the opposite direction. Similarly, if a puncture is asymptotic to a Reeb chord, the chord runs from
the incoming one-sided lift to the outgoing one-sided lift at the output
puncture, and from the outgoing one-sided lift to the incoming one-sided lift
at an input puncture, where incoming and outgoing are taken with respect to
the boundary orientation.

With these conventions, and setting $a(z)=0$ when the asymptotic object $z$ is
neither a double point nor a Reeb chord, Stokes' theorem gives the energy identity
\[
E(u):=\int u^*\omega
= a(y)-\sum_{j=1}^k a(x_j),
\]
where $y$ denotes the asymptotic object at the output puncture and
$x_1,\ldots,x_k$ denote those at the input punctures. Since $E(u)\geq 0$, with
equality only for constant maps, any rigid non-constant pseudoholomorphic disk
contributing to the $A_\infty$-operations must satisfy
\begin{equation}
	\label{eq:energy}
	a(y)>\sum_{j=1}^k a(x_j).
\end{equation}


For two exact Lagrangian immersions $i_0 \colon L_0 \to W$ and
$i_1 \colon L_1 \to W$ intersecting transversely away from their
double points, both the framework of \cite{gao17} and that of
\cite{cdrgg17} define the generators of the wrapped Floer cochain complex $\CW^*(L_0, L_1)$
as time-one Hamiltonian chords from $i_0(L_0)$ to
$i_1(L_1)$, where intersection points in the interior of $W$ are
viewed as constant chords. We therefore use the same notation $\CW^*(L_0,L_1)$ for
the underlying graded module in both frameworks.

\section{Degeneration of pearly trees and Floer trajectories}
\label{subsec:rigidity}

The key analytic input for the equivalence between the frameworks of \cite{gao17} and \cite{cdrgg17} is that
the pearly-tree operations of \cite{gao17} simplify drastically when all input
generators carry positive action. We make this precise in the
following two theorems, which show that every rigid pearly tree map
degenerates to a single holomorphic disk, and every rigid Floer
trajectory degenerates to a single holomorphic strip. Together with
the bijection of Section~\ref{subsec:bc-aug}, these results imply
that the pearly-tree decorations in the framework of \cite{gao17} reduce to simple
boundary punctures weighted by augmentation values, making the
$A_\infty$-operations of \cite{gao17} identical to augmentation-weighted
operations of \cite{cdrgg17} on the objects of interest.

Recall from \cite[Definition~4.18]{gao17} that a \emph{stable pearly
	tree map} $\mathcal T$ associated to an underlying colored rooted tree $T$ in the
sense of \cite[Definition~4.7]{gao17} consists of:
\begin{itemize}
	\item for each vertex of $T$, a pseudoholomorphic disk with boundary on $i(L)$, such that the boundary admits a continuous lift to $L$ away from the boundary punctures, each of which corresponds to an adjacent edge,
	\item for each interior edge of $T$ connecting two vertices, a gradient flow trajectory on
	$L \times_i L$ of finite length,
	\item for each leaf edge (a ray from a vertex toward an input),
	a gradient flow ray on $L \times_i L$ asymptoting to a
	Morse-type or double-point input generator, or a
	pseudoholomorphic strip asymptoting to a Hamiltonian chord
	input generator, according to the color of the edge,
	\item for the root edge (a ray from a vertex toward the output),
	a gradient flow ray on $L \times_i L$ asymptoting to a
	Morse-type or double-point output generator, or a
	pseudoholomorphic strip asymptoting to a Hamiltonian chord
	output generator, according to the color of the edge,
\end{itemize}
together with matching conditions at each vertex relating the endpoints
of the adjacent flow trajectories or strip-like ends to the boundary
punctures of the adjacent disk components. In the degenerate case where $T$ has no vertices and consists of a
single edge, $\mathcal{T}$ consists of a single gradient flow
trajectory from the input to the output with no disk component.

The $A_\infty$-structure maps $m_k$ on $\CW^*(L)$ are defined by
counting rigid stable pearly tree maps, subject to the degree constraint:
\begin{equation}
	\label{eq:pearly-degree}
	|c_0| - \sum_{j=1}^k |c_j| = 2 - k,
\end{equation}
where $c_0$ is the root (output) generator and $c_1, \ldots, c_k$ are
the leaf (input) generators.

\begin{theorem}[Degeneration of pearly trees]
	\label{thm:pearly-single-disk}
	Let $\mathcal{T}$ be a rigid stable pearly tree map contributing to the $A_\infty$-operation $m_k$ on
	$\CW^*(L)$. If all $k$ input generators $c_1, \ldots, c_k$ are
	positive-action double-point generators, i.e.\ $a(c_j) > 0$ for
	all $j = 1, \ldots, k$, then:
	\begin{enumerate}[label=(\roman*)]
		\item The output generator $c_0$ is a positive-action double-point
		generator, $a(c_0) > 0$.
		\item The underlying tree $T$ has a single internal vertex, i.e.\
		$\mathcal{T}$ consists of a single holomorphic disk component
		with all edges of $T$ given by constant Morse trajectories.
	\end{enumerate}
\end{theorem}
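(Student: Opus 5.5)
The plan is an action (energy) argument propagated along the tree, followed by a dimension count. Assign to every edge $e$ of $T$ an action $a(e)$ as follows. For a leaf or root edge, $a(e)$ is the action of the generator it carries. For an interior edge, $a(e)$ is the action of the point of $L\times_i L$ reached by its gradient trajectory, i.e.\ $0$ on the diagonal $\Delta_L$ and $a(x_\pm)$ on $S(L,i)$. Hamiltonian-chord colored edges get the action of the chord, which is negative.

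The key observations are the following.

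\begin{enumerate}[label=(\alph*)]
\item Gradient trajectories on the discrete set $S(L,i)$ are constant, so action is unchanged along any double-point edge.
\item Gradient trajectories on $\Delta_L$ stay in $\Delta_L$, where the action is $0$.
\item At each vertex, the disk satisfies the energy identity $E(u)=a(\text{out})-\sum a(\text{in})\geq 0$, with strict inequality \eqref{eq:energy} if $u$ is non-constant.
\end{enumerate}

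For strip-like leaf edges the relevant energy identity gives the analogous monotonicity, but leaves carry only double-point inputs here, so all leaf edges are flow rays on the discrete set $S(L,i)$ and hence constant. Induct from the leaves toward the root. Each vertex $v$ has all incoming edge actions $\geq$ the actions of the inputs in the subtree above $v$, summed. Since every input has positive action, its outgoing edge has action $\geq\sum_{\text{leaves above }v}a(c_j)>0$. This uses stability: a vertex whose disk is constant must have at least three special points, and the same inequality still holds.

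In particular the root edge has positive action. Diagonal (Morse) generators have action $0$ and chord generators have negative action, so the root edge cannot be either. Hence $c_0$ is a positive-action double-point generator, proving (i). The same reasoning shows that every interior edge has positive action, so every interior edge lies on $S(L,i)$ and is a constant trajectory at a double point. The degenerate vertex-free case cannot occur unless $k=1$ with a constant trajectory, and that configuration is not rigid or is excluded as $m_1$'s trivial term; I would check this against \cite{gao17}'s conventions.

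For (ii), all edges are now constant, so $\mathcal T$ is a tree of holomorphic disks glued at double-point punctures with zero-length connecting edges. Rigidity then gives (ii). If $T$ had two or more vertices, the configuration would be a nodal disk with a node at a double point. The moduli of pearly trees of type $T$ has the edge lengths as parameters, and here the interior edge has length zero. Such a configuration is then a codimension-one boundary stratum of the moduli space of glued single disks. Equivalently, it is a boundary point of a one-parameter family of pearly trees with varying length, since the constant flow on a discrete set allows any length. Either way it is not an isolated point of a $0$-dimensional moduli space: the length parameter $\ell\in[0,\infty)$ of a constant trajectory is a free modulus. Using the index additivity from the degree formula \eqref{eq:pearly-degree}, each component disk, being regular, has nonnegative virtual dimension. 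A configuration with an interior edge of free length would therefore sit in a moduli space of dimension $\geq 1$, which contradicts rigidity. Hence $T$ has exactly one vertex, and all its leaf and root edges are constant flows at double points.

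The main obstacle is this last step: matching Gao's conventions for how interior edges on $S(L,i)$ contribute a length parameter, and for how the dimension formula decomposes over vertices. I would first try to verify directly from \cite[Definition~4.18]{gao17} that a constant edge at a double point contributes an unconstrained $\mathbb R_{>0}$ length, which gives the dimension jump.
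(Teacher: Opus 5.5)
Your Step 1 (propagating positive action from the leaves to the root with the energy identity, and using discreteness of $S(L,i)$ to make every edge constant) is the paper's Step 1, and it gives (i). It needs two small repairs.

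First, your bound $a(\mathrm{out}(v))\geq\sum_{\text{leaves above }v}a(c_j)>0$ gives no strict positivity at a vertex with no leaves above it, e.g.\ a disk whose only puncture is its output. There you need the strict form of \eqref{eq:energy}, with non-constancy forced by stability; this is why the paper treats valency-one vertices separately.

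Second, the vertex-free tree needs no convention check. It is a constant trajectory with $c_0=c_1$, so $|c_0|-|c_1|=0\neq 1=2-k$, which violates \eqref{eq:pearly-degree}. This is the case $m=0$ of \eqref{eq:nodal-configuration}.

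For (ii) you take a genuinely different route. The paper never mentions edge lengths. It treats the moduli space of the degenerate configuration as a product $\M(D_1)\times\cdots\times\M(D_m)$ and deduces that every disk is rigid. It then sums the single-disk degree constraints to get $|c_0|-\sum_j|c_j|=m+1-k$, and compares this with the degree $2-k$ of $m_k$.

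You instead locate the extra dimension geometrically. Each of the $m-1$ interior edges is a constant flow at a point of $S(L,i)$, so it carries a free length and imposes no matching condition. Hence the configuration moves in a family of dimension $\sum_i\dim\M(D_i)+(m-1)\geq m-1$.

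The two counts express the same identity. Substituting the single-disk dimension $|c_{i,0}|-\sum_j|c_{i,j}|+n_i-2$ into \eqref{eq:pearly-degree} shows that the virtual dimension of a tree exceeds $\sum_i\dim\M(D_i)$ by exactly $m-1$. So the convention you flag as your main obstacle is already built into the paper's degree formula.

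The paper's bookkeeping has the advantage of using only the degree of $m_k$ and the rigidity of individual disks, so it never has to inspect how Gao parametrizes interior edges. Yours explains why multi-disk configurations cannot be rigid: they are interior points of a one-parameter family in $\ell$, running from a glued disk to a broken tree as $\ell\to\infty$.

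Do drop the sentence saying the interior edge ``has length zero'' and is a codimension-one boundary stratum. Its length is arbitrary, and that is exactly what your argument uses.
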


\begin{proof}
	
	We proceed in two steps.
	
	\medskip
	\noindent\textbf{Step 1: All gradient trajectories are constant.}
	
	First observe that the only generators of positive action are the
	positive-action double-point generators. Also, the set of double-point generators $S(L,i)$ is discrete, hence any gradient trajectory converging to a point
	in $S(L,i)$ is constant.
	
	Orient $T$ as a rooted tree, with all edges directed toward the root edge, so
	that each internal vertex has a single outgoing edge and zero or more
	incoming edges, which we regard as the output and inputs of the
	holomorphic disk at that vertex respectively. Order all edges of $T$
	by their distance to the root edge, and proceed by induction on this
	distance starting from the largest.
	
	Edges at maximal distance are either leaf edges, asymptoting to
	positive-action double-point generators by assumption, or interior
	edges adjacent to valency-one vertices, whose disk components have
	no inputs and hence have positive-action output by the energy
	inequality~\eqref{eq:energy}. In either case the associated gradient
	trajectory converges to a point in $S(L,i)$, hence is constant.
	
	Suppose by induction that all edges at distance greater than $d$ from
	the root carry positive-action double-point generators and have
	constant associated gradient trajectories. Consider an edge $e$ at
	distance $d$. If $e$ is a leaf edge or adjacent to a valency-one
	vertex, the same argument as the base case applies. Otherwise, the
	source vertex of $e$ has all incoming edges at distance greater than
	$d$, hence carrying positive-action double-point generators by the
	induction hypothesis. By the energy inequality~\eqref{eq:energy},
	the output of the holomorphic disk at the source vertex also has
	positive action, hence lies in $S(L,i)$, and the gradient trajectory
	associated to $e$ converges to a point in $S(L,i)$, hence is constant.
	
	By induction, all gradient trajectories are constant and all
	generators in the tree lie in $S(L,i)$ and have positive action. In
	particular the root edge carries a positive-action double-point
	generator, proving (i). The pearly tree $\mathcal{T}$ thus degenerates
	to a nodal configuration of $m \geq 0$ disk components with adjacent
	components sharing a boundary positive-action double-point puncture.

	\medskip
	\noindent\textbf{Step 2: $m = 1$.}
	
	The moduli space of such a configuration decomposes as a disjoint
	union over all assignments of internal positive-action double-point
	generators, and for each such assignment the moduli space is a product
	\[
	\M(D_1) \times \cdots \times \M(D_m),
	\]
	where $D_i$ denotes the $i$-th disk component with its fixed input
	and output generators. Since the internal matching generators lie in the discrete set
	$S(L,i)$, the moduli space of such configurations is a finite union
	of products of the component moduli spaces. Hence its virtual
	dimension is the sum of the virtual dimensions of the disk
	components. By standard transversality considerations, rigidity of
	the total configuration implies that each nonempty component factor
	is rigid, i.e.\ the virtual dimension of $\M(D_i)$ is zero for all $i$. Applying the degree constraint~\eqref{eq:pearly-degree} to
	each disk $D_i$ with input generators $c_{i,1}, \ldots, c_{i,n_i}$
	and output generator $c_{i,0}$ gives
	\[
	|c_{i,0}| - \sum_{j=1}^{n_i} |c_{i,j}| = 2 - n_i.
	\]
	Summing over $i = 1, \ldots, m$:
	\[
	\sum_{i=1}^m |c_{i,0}| - \sum_{i=1}^m \sum_{j=1}^{n_i}
	|c_{i,j}| = 2m - \sum_{i=1}^m n_i.
	\]
	The $m-1$ internal generators each appear once as an output $c_{i,0}$
	and once as an input $c_{i,j}$, and hence cancel on the left-hand side.
	The remaining terms are the external output $c_0$ and the $k$ external 
	inputs $c_1, \ldots, c_k$. The total number of inputs across all
	disks is $\sum_{i=1}^m n_i = k + (m-1)$, so
	\begin{equation}\label{eq:nodal-configuration}
	|c_0| - \sum_{j=1}^k |c_j| = 2m - k - (m-1) = m + 1 - k.
	\end{equation}
	Comparing with the global constraint~\eqref{eq:pearly-degree} gives
	$m + 1 - k = 2 - k$, hence $m = 1$, proving (ii).
\end{proof}

We now extend Theorem~\ref{thm:pearly-single-disk} to the bimodule
structure maps
\[
n_{k,l} : \CW^*(L_1)^{\otimes l} \otimes
\CW^*(L_0, L_1) \otimes
\CW^*(L_0)^{\otimes k}
\longrightarrow \CW^*(L_0, L_1)
\]
defined in \cite[Definitions~4.29-4.33]{gao17} by counting rigid
stable broken Floer trajectories. A \emph{stable unbroken Floer
	trajectory} consists of:
\begin{itemize}
	\item a holomorphic strip $u : \mathbb{R} \times [0,1] \to W$
	with boundary on $i_0(L_0) \cup i_1(L_1)$, asymptotic to chords
	$c^{\rightarrow}$ (input) and $c^{\leftarrow}$ (output) at $s \to +\infty$ and $s \to -\infty$
	respectively,
	\item a finite collection of boundary marked points on
	$\mathbb{R} \times \{0\}$ and $\mathbb{R} \times \{1\}$,
	\item for each boundary marked point on $\mathbb{R} \times \{0\}$
	(resp.\ $\mathbb{R} \times \{1\}$), a stable pearly tree map associated to $L_0$ (resp.\ $L_1$) attached at that
	point via its root (output) generator, with leaf (input) generators
	contributing to the external inputs $c^0_1, \ldots, c^0_k$
	(resp.\ $c^1_1, \ldots, c^1_l$) of $n_{k,l}$,
\end{itemize}
together with matching conditions at each boundary marked point
relating the output generator of the attached pearly tree to the
boundary value of the strip. A \emph{stable broken Floer trajectory}
is a sequence $(\widetilde{\Sigma}^{(1)}, \ldots,
\widetilde{\Sigma}^{(K)})$ of stable unbroken Floer trajectories with
matching asymptotic conditions $c^{\leftarrow}(\nu) = c^{\rightarrow}(\nu+1)$ for $\nu =
1, \ldots, K-1$.

The degree constraint for a rigid broken trajectory is:
\begin{equation}
	\label{eq:bimod-degree}
	|c^{\leftarrow}| - |c^{\rightarrow}| - \sum_{a=1}^k |c^0_a|
	- \sum_{b=1}^l |c^1_b| = 1 - k - l.
\end{equation}

\begin{theorem}[Degeneration of Floer trajectories]
	\label{thm:bimod-single-strip}
	Let $(\widetilde{\Sigma}^{(1)}, \ldots, \widetilde{\Sigma}^{(K)})$
	be a rigid stable broken Floer trajectory contributing to the
	bimodule operation $n_{k,l}$, with inputs $c^0_k, \ldots, c^0_1,
	c^{\rightarrow}, c^1_1, \ldots, c^1_l$ and output $c^{\leftarrow}$. If each external
	leaf generator $c^i_j$ of every attached pearly tree is a
	positive-action double-point generator, then:
	\begin{enumerate}[label=(\roman*)]
		\item Each attached stable pearly tree map degenerates to one whose underlying tree has no internal vertices and
		consists of a single edge with a constant gradient trajectory.
		\item The broken trajectory is unbroken, i.e.\ $K = 1$.
	\end{enumerate}
	Consequently, the entire configuration is a single holomorphic
	strip with boundary marked points corresponding to positive-action
	double-point generators.
\end{theorem}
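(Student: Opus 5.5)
The plan mirrors the proof of Theorem~\ref{thm:pearly-single-disk}: first simplify each attached pearly tree using positivity of action, then count dimensions to rule out breaking.

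\textbf{Step 1: the attached pearly trees.} Fix an attached stable pearly tree map $\mathcal T$ on $L_0$ or $L_1$. All of its leaf generators are positive-action double-point generators by hypothesis. I would run the inductive argument of Step~1 of Theorem~\ref{thm:pearly-single-disk} verbatim. That argument never used rigidity of $\mathcal T$ itself, only the energy inequality~\eqref{eq:energy} and discreteness of $S(L,i)$. It shows that every gradient trajectory in $\mathcal T$ is constant, that every internal edge and the root edge carry positive-action double-point generators, and that $\mathcal T$ is a nodal configuration of $m\ge 0$ disks glued at such points. In particular the attaching point on the strip is a double point, so the strip has a boundary marked point there that switches branches.

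To prove (i) I must show $m=0$. Since all matching generators lie in a discrete set, the moduli space of the whole broken configuration is a finite union of products: the moduli spaces of the unbroken strips (each carrying the jumps prescribed by the roots of its attached trees), times the moduli spaces $\M(D_i)$ of all disk components. Its virtual dimension is the sum of the factors' virtual dimensions. By the same transversality considerations as in Step~2 of Theorem~\ref{thm:pearly-single-disk}, every nonempty factor has nonnegative virtual dimension. Each strip component, being $\bR$-translation invariant and nonconstant, has virtual dimension at least $1$ before quotienting, and hence at least $0$ after.

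\textbf{Step 2: degree count.} For each disk $D_i$, rigidity gives $|c_{i,0}|-\sum_j|c_{i,j}|=2-n_i$. For an unbroken strip $\widetilde\Sigma^{(\nu)}$ with $p_\nu$ marked points whose attached roots carry generators $r_{\nu,s}$, the reduced rigid index relation is $|c^{\leftarrow}(\nu)|-|c^{\rightarrow}(\nu)|-\sum_s|r_{\nu,s}|=1-p_\nu$; any excess dimension of a factor enters as a nonnegative correction. I would sum these relations as in~\eqref{eq:nodal-configuration}. The internal generators cancel, as do the breaking chords $c^{\leftarrow}(\nu)=c^{\rightarrow}(\nu+1)$ and the roots $r_{\nu,s}$. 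Let $M$ be the total number of disks, $P=\sum p_\nu$ the number of attached trees, and $N=k+l$. Counting inputs of the disks gives $\sum_i n_i = N+M-P$, so the total is
\[
|c^{\leftarrow}|-|c^{\rightarrow}|-\sum_a|c^0_a|-\sum_b|c^1_b| = K - P + 2M - (N+M-P) = K+M-N + (\text{nonneg. corrections}).
\]
Comparing with~\eqref{eq:bimod-degree}, which gives $1-N$, yields $K+M\le 1$. Since $K\ge 1$, we get $K=1$ and $M=0$. This proves (ii), and also (i): every attached tree is a single constant edge, so $P=N$. The consequence then follows directly, the configuration being one strip with $k+l$ marked points at positive-action double points.

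\textbf{Main obstacle.} The delicate point is the index bookkeeping in Step~2. I need to verify two things. First, that \eqref{eq:bimod-degree} is the correct dimension-zero condition for the quotient by $\bR$-translations of each unbroken piece. Second, that the strip index convention, with $1-p_\nu$ and marked points free to move along the boundary, is compatible with Gao's definitions. I also need to justify that each nonempty factor has nonnegative virtual dimension (regularity of the component moduli spaces), including the strip factors with branch-jump marked points. I would handle these by appealing to Gao's transversality for the component moduli spaces, as was done implicitly in Theorem~\ref{thm:pearly-single-disk}.
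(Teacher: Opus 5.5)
Your proposal is correct and follows the paper's proof essentially step for step. Step~1 reuses the energy/discreteness induction from Theorem~\ref{thm:pearly-single-disk} to reduce the configuration to a nodal configuration of strips and disks. Step~2 sums the componentwise degree constraints, cancels the internal and breaking generators, and compares with \eqref{eq:bimod-degree} to force a single strip with no disks. Your bookkeeping is slightly more careful than the paper's: you track the $P$ attached roots explicitly and allow nonnegative excess dimensions, giving $K+M\le 1$, where the paper simply asserts that each factor is rigid. It is nonetheless the same argument.
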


\begin{proof}
	We proceed in two steps.
	
	\medskip
	\noindent\textbf{Step 1: All gradient trajectories are constant.}
	
	By the same argument as in Step 1 of the proof of
	Theorem~\ref{thm:pearly-single-disk}, applied to each attached
	stable pearly tree map, all gradient trajectories in every attached
	pearly tree are constant, since all external leaf generators are
	positive-action double-point generators by assumption. The entire
	configuration thus degenerates to a nodal configuration consisting
	of $K$ holomorphic strips decorated by $m^0$ and $m^1$ holomorphic
	disk components attached at boundary marked points on
	$\mathbb{R} \times \{0\}$ and $\mathbb{R} \times \{1\}$
	respectively.
	
	\medskip
	\noindent\textbf{Step 2: $K = 1$ and $m^0 = m^1 = 0$.}
	
	Let $M = K + m^0 + m^1$ denote the total number of holomorphic
	strip and disk components in the nodal configuration. The moduli
	space decomposes as a product, and for the configuration to be rigid
	each factor must be rigid. Applying the degree constraint to each
	component and summing, the internal generators cancel exactly as in
	Step 2 of Theorem~\ref{thm:pearly-single-disk}, giving
	\[
	|c^{\leftarrow}| - |c^{\rightarrow}| - \sum_{a=1}^k |c^0_a|
	- \sum_{b=1}^l |c^1_b| = M + 1 - (1 + k + l) = M - k - l,
	\]
	analogous to \eqref{eq:nodal-configuration}.
	Comparing with the global constraint~\eqref{eq:bimod-degree} gives
	$1 - k - l = M - k - l$, hence $M = 1$. Since $M = K + m^0 + m^1$, we conclude $K = 1$ and $m^0 = m^1 =
	0$, proving (i) and (ii). Consequently, the configuration is a
	single holomorphic strip with boundary marked points at
	positive-action double-point generators, with no disk components
	attached.
\end{proof}

\section{Bounding cochains and augmentations}
\label{subsec:bc-aug}

Recall that the \emph{Legendrian lift} of $L$ is
\[
L^+ := \{(i(p), -f(p)) \mid p \in L\} \subset W \times \bR_z,
\]
equipped with the contact form $dz + \theta$. The Reeb chords of
$L^+$ are in bijection with the positive-action double-point
generators of $\CW^*(L)$: the positive-action double-point
generator $x_+ = (x^+, x^-)$ with $f(x^+) > f(x^-)$ corresponds to
a Reeb chord $\bar{x}$ of length $a(x_+) = f(x^+) - f(x^-)$, with
degree $|\bar{x}| = 1-|x_+|$. The \emph{Chekanov--Eliashberg
	algebra} $\CE(L^+)$ is the free unital noncommutative DGA generated
by these Reeb chords.

\begin{definition}\label{def:bounding-cochain-positive}
	A bounding cochain $b\in \CW^1(L)$ for $L$ is \emph{supported on the
		positive-action double points} if $b$ is a linear combination of
	positive-action double-point generators $x_+$ in $\CW^1(L)$.
\end{definition}

\begin{proposition}
	\label{prop:bc-aug}
	There is a natural bijection between bounding cochains supported on
	the positive-action double points of $L$ and augmentations of
	$\CE(L^+)$.
\end{proposition}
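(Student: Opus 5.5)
The bijection sends an augmentation $\varepsilon\colon \CE(L^+)\to \mathbb{k}$ to the cochain
\[
b_\varepsilon:=\sum_{x} \varepsilon(\bar x)\, x_+ ,
\]
where the sum runs over the double points $x$ of $L$. In the other direction, a cochain $b=\sum_x b_x\, x_+$ goes to the algebra map $\varepsilon_b$ determined by $\varepsilon_b(\bar x)=b_x$. Since $\CE(L^+)$ is free unital on the Reeb chords $\bar x$, any assignment of values on generators extends uniquely to a unital algebra map. The two assignments are therefore mutually inverse at the level of sets of coefficients. What remains is to check that the defining conditions correspond.

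\textbf{Gradings.} An augmentation vanishes on generators of nonzero degree, so $\varepsilon(\bar x)\neq 0$ forces $|\bar x|=0$. By the relation $|\bar x|=1-|x_+|$, this is the same as $|x_+|=1$. Hence $b_\varepsilon\in\CW^1(L)$. Conversely, a $b\in \CW^1(L)$ supported on positive-action double points only involves chords of degree $0$, so $\varepsilon_b$ is graded.

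\textbf{Equations.} The Maurer--Cartan equation is $\sum_{k\ge 0} m_k(b,\dots,b)=0$. The $k=0$ term $m_0$ counts disks with no inputs. By the energy inequality \eqref{eq:energy}, the output of such a disk has positive action, so it is a positive-action double point. Expanding multilinearly, each term $m_k(x^1_+,\dots,x^k_+)$ has only positive-action double-point inputs. Theorem~\ref{thm:pearly-single-disk} then says each such term counts single rigid holomorphic disks with boundary on $i(L)$, one output corner at some $y_+$ and input corners at the $x^j_+$. Its convergence is automatic: energy bounds the input actions by $a(y_+)$, and there are finitely many double points. The coefficient of $y_+$ in the Maurer--Cartan expression is therefore
\[
\sum_{k}\ \sum_{x^1,\dots,x^k}\ \#\M(y_+;x^1_+,\dots,x^k_+)\; b_{x^1}\cdots b_{x^k}.
\]

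On the Legendrian side, the differential $\partial\bar y$ counts rigid disks in $W\times\bR$ with boundary on $L^+$, one positive puncture at $\bar y$ and negative punctures at chords $\bar x^1,\dots,\bar x^k$. By the standard projection correspondence (Ekholm--Etnyre--Sullivan / Dimitroglou Rizell), these are in bijection with the rigid holomorphic polygons in $W$ with one positive-action output corner at $y_+$ and input corners at the $x^j_+$, with matching gradings. The augmentation equation $\varepsilon\circ\partial=0$, evaluated on $\bar y$, is then exactly the vanishing of the coefficient displayed above, with $b_{x}=\varepsilon(\bar x)$. It suffices to check $\varepsilon\circ\partial=0$ on generators, since $\varepsilon\circ\partial$ is an $(\varepsilon,\varepsilon)$-derivation.

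\textbf{Main obstacle.} The hard step is matching the moduli spaces: Gao's disks with corner/capping conventions and chosen almost complex structure must be identified with the lifted Chekanov--Eliashberg disks, including orientations and signs. The plan is to choose a cylindrical lift of $J$ on $W\times\bR$ and invoke the known bijection between the two disk counts. Then the energy identity $E(u)=a(y)-\sum_j a(x_j)$ is checked to equal the contact action difference, and the orientation conventions are checked to agree, possibly after a sign twist on the generators. Naturality of the bijection then follows from its being defined coefficientwise.
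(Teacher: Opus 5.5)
Your proposal is correct and follows essentially the same route as the paper: the coefficientwise bijection $b_\eps=\sum_x \eps(\bar x)\,x_+$, then Theorem~\ref{thm:pearly-single-disk} to reduce the Maurer--Cartan equation to single-disk counts with positive-action outputs $y_+$, and finally the identification of the $y_+$-coefficient with $\eps(\partial\bar y)$. The differences are minor: the paper takes the Chekanov--Eliashberg differential of \cite{cdrgg17} to be defined directly by these polygon counts in $W$, which absorbs your detour through the symplectization and the matching of $J$ and signs that you flag as the main obstacle, while you make explicit the grading, finiteness, and derivation checks that the paper leaves implicit.
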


\begin{proof}
	Let $b = \sum_x \lambda_x \cdot x_+ \in \CW^1(L)$ be a bounding
	cochain supported on positive-action double points, where the sum
	runs over all degree-$1$ positive-action double-point generators
	$x_+$ and $\lambda_x \in k$. Since $b$ is a bounding cochain, it has to satisfy the Maurer--Cartan equation, which reads
	\[
	\sum_{k \geq 0} m_k(b, \ldots, b) = 0.
	\]
	By Theorem~\ref{thm:pearly-single-disk}, every rigid pearly tree map
	with positive-action double-point inputs degenerates to a single
	holomorphic disk with positive-action double-point output. At any such output $y_+$, the
	coefficient in $\sum_k m_k(b, \ldots, b)$ is
	\begin{equation}
		\label{eq:mc}
		\#\M(y_+) + \sum_{x_1} \lambda_{x_1} \#\M(y_+; (x_1)_+)
		+ \sum_{x_1, x_2} \lambda_{x_1} \lambda_{x_2}
		\#\M(y_+; (x_1)_+, (x_2)_+) + \cdots = 0,
	\end{equation}
	where $\#\M(y_+; (x_1)_+, \ldots, (x_d)_+)$ denotes the signed count
	of rigid pseudoholomorphic disks with boundary on $i(L)$, output
	corner at $y_+$ and input corners at $(x_1)_+, \ldots, (x_d)_+$,
	equivalently by Theorem~\ref{thm:pearly-single-disk} the count of
	rigid stable pearly tree maps with those generators.
	
	We define $\eps_b \colon \CE(L^+) \to k$ by $\eps_b(\bar{x}) =
	\lambda_x$ on degree-$0$ generators $\bar{x}$ and $\eps_b = 0$ on
	generators of nonzero degree. By definition \cite{cdrgg17}, the
	differential of $\CE(L^+)$ on the degree-$(-1)$ generator $\bar{y}$
	is
	\[
	d\bar{y} = \#\M(y_+) + \sum_{x_1} \#\M(y_+; (x_1)_+)\, \bar{x}_1
	+ \sum_{x_1, x_2} \#\M(y_+; (x_1)_+, (x_2)_+)\,
	\bar{x}_1 \bar{x}_2 + \cdots.
	\]
	Applying $\eps_b$ and using \eqref{eq:mc} gives $\eps_b(d\bar{y}) =
	0$, so $\eps_b$ is an augmentation.
	
	Conversely, given an augmentation $\eps \colon \CE(L^+) \to k$,
	define
	\[
	b_{\eps} := \sum_x \eps(\bar{x})\, x_+ \;\in\; \CW^1(L),
	\]
	where the sum runs over all degree-$1$ positive-action double-point
	generators $x_+$. Since $b_{\eps}$ is supported on positive-action
	double-point generators, by Theorem~\ref{thm:pearly-single-disk}
	every rigid pearly tree map with inputs from $b_{\eps}$ degenerates to a
	single holomorphic disk with positive-action double-point output. So the
	Maurer--Cartan equation has contributions only at positive-action
	double-point generators $y_+$, and it
	reduces to
	\[
	\#\M(y_+) + \sum_{x_1} \eps(\bar{x}_1)\, \#\M(y_+; (x_1)_+)
	+ \sum_{x_1, x_2} \eps(\bar{x}_1)\eps(\bar{x}_2)\,
	\#\M(y_+; (x_1)_+, (x_2)_+) + \cdots
	\;=\; \eps(d\bar{y}),
	\]
	which vanishes since $\eps$ is an augmentation. The maps $b \mapsto
	\eps_b$ and $\eps \mapsto b_{\eps}$ are inverse by construction.
\end{proof}

\begin{remark}
	The notion of a bounding cochain supported on positive-action double 
	points depends on the choice of perturbation of $L$ to a nice immersion 
	in the sense of Remark~\ref{rmk:nice-immersion}. For a fixed nice 
	immersion, the set of such bounding cochains is well-defined. Moreover, 
	by \cite[Proposition~4.7]{cdrgg17}, if two nice immersions are connected by a safe isotopy, i.e.\ an isotopy through nice 
	immersions with only transverse double points, then there is a natural 
	bijection between their augmentations of the Chekanov--Eliashberg algebra. 
	Via Proposition~\ref{prop:bc-aug}, this gives a natural bijection between 
	their positive-action bounding cochains. Thus the set of positive-action bounding cochains is independent of the choice of nice immersion up to safe isotopy.
\end{remark}

\section{Equivalence of the two Floer-theoretic frameworks}
\label{subsec:comparison}

We now make precise the claim that the frameworks of \cite{gao17} and \cite{cdrgg17} produce identical Floer theory for the objects of interest.
By Proposition~\ref{prop:bc-aug}, a bounding cochain $b$ supported
on positive-action double points corresponds canonically to an
augmentation $\eps_b$ of $\CE(L^+)$.

The differential on $\CW^*\!\left((L_0,b_0), (L_1,b_1)\right)$ in
\cite{gao17} is defined by \cite[equation~(4.59)]{gao17} as
\[
m_1^{\mathrm{Gao}}(c^{\rightarrow})
\;:=\; \sum_{k, l \geq 0} n_{k,l}\bigl(
\underbrace{b_1, \ldots, b_1}_{l},\,
c^{\rightarrow},\,
\underbrace{b_0, \ldots, b_0}_{k}
\bigr),
\]
where $b_j$ are bounding cochains for $L_j$ satisfying the
Maurer--Cartan equation.

More generally, as explained in \cite[Section~5.7]{gao17}, the higher
$A_{\infty}$-operations $m_d^{\mathrm{Gao}}$ are defined by counting pseudoholomorphic polygons
with boundary on the relevant immersed Lagrangians, where the boundary segment
labeled by each Lagrangian may carry attached stable pearly trees whose inputs
come from the corresponding bounding cochain.

The differential on
$\CW^*\!\left((L_0, \eps_0), (L_1, \eps_1)\right)$ in the
framework of \cite{cdrgg17}, where $\eps_j \colon \CE(L_j^+) \to k$ are augmentations
of the Chekanov--Eliashberg algebra of $L_j^+$, is defined by \cite[equation~(10)]{cdrgg17} as
\begin{align*}
&m_1^{\mathrm{CDRGG}}(c^{\rightarrow})
\;:=\; \\
&\sum_{c^{\leftarrow}}
\sum_{\substack{x^0_1, \ldots, x^0_k \\ x^1_1, \ldots, x^1_l}}
\#\M\bigl(c^{\leftarrow};\, (x^0_k)_+, \ldots, (x^0_1)_+,\,
c^{\rightarrow},\, (x^1_1)_+, \ldots, (x^1_l)_+\bigr)
\cdot \prod_a \eps_0(\overline{x^0_a})
\cdot \prod_b \eps_1(\overline{x^1_b}) \cdot c^{\leftarrow},
\end{align*}
where the sum runs over all output chords $c^{\leftarrow}$ of degree
$|c^{\leftarrow}| = |c^{\rightarrow}| + 1$ and all collections of double points $x^0_a$ of $L_0$ and
$x^1_b$ of $L_1$, and $\#\M(c^{\leftarrow}; \ldots)$ denotes the signed count of
rigid holomorphic strips with output $c^{\leftarrow}$, input
$c^{\rightarrow}$, and boundary punctures at the indicated
double-point generators.

More generally, as explained in \cite[Section~6.2]{cdrgg17}, the higher
$A_{\infty}$-operations $m_d^{\mathrm{CDRGG}}$ are defined by counting Hamiltonian-perturbed
pseudoholomorphic polygons with boundary on the relevant immersed Lagrangians. In the immersed case, these polygons are allowed
to have boundary punctures at double points, and these punctures are weighted
by the corresponding augmentation values.

\begin{remark}\label{rmk:immersed-wrapped-fukaya-category}
	We emphasize that \cite{cdrgg17} does not verify that their construction
	gives a complete $A_\infty$-category in the immersed wrapped setting. 
	As noted in \cite[Section~6.2]{cdrgg17}, the coherent Hamiltonian 
	perturbations required to establish the $A_\infty$-relations are not 
	verified there.
	Instead, the authors only use the fact that the component of the wrapped
	$A_\infty$-operation whose inputs come from the ordinary Floer complexes,
	rather than the wrapped Floer complexes, agrees with the corresponding
	operation defined in the unwrapped setting.
	In contrast, the immersed wrapped Fukaya category $\Wim(W)$ defined by
	\cite{gao17} is a well-defined $A_\infty$-category whose objects are exact
	Lagrangian immersions equipped with bounding cochains, and whose $A_{\infty}$-operations
	are $m_d^{\mathrm{Gao}}$.
	We verify the $A_\infty$-relations for the 
		construction of \cite{cdrgg17} in this section by comparing it with 
		the construction of \cite{gao17}.
\end{remark}

\begin{theorem}[Equivalence of $A_\infty$-operations]
	\label{thm:floer-equivalence}
	Let $d\geq 1$, and let $i_j \colon L_j \to W$, $j=0,\ldots,d$, be exact Lagrangian
	immersions which intersect transversely away from their double points.
	Suppose that each $L_j$ is equipped with a bounding cochain $b_j$
	supported on positive-action double points, and let
	$\eps_j=\eps_{b_j}$ be the corresponding augmentation via
	Proposition~\ref{prop:bc-aug}. Then, the $A_{\infty}$-operation
	\[
	m_d^{\mathrm{Gao}}
	\colon
	\CW^*((L_{d-1},b_{d-1}),(L_d,b_d))\otimes\cdots\otimes
	\CW^*((L_0,b_0),(L_1,b_1))
	\to
	\CW^*((L_0,b_0),(L_d,b_d))
	\]
	coincides with the corresponding operation
	$m_d^{\mathrm{CDRGG}}$ after identifying $b_j$ with $\eps_j$.
\end{theorem}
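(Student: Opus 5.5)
The plan is to compare the two operations moduli space by moduli space, following the pattern of the $d=1$ case and then extending it to polygons. Write $m_d^{\mathrm{Gao}}(c_d,\ldots,c_1)$ as a sum over configurations. Each configuration is a rigid (perturbed) pseudoholomorphic $(d+1)$-gon with boundary on $i_0(L_0),\ldots,i_d(L_d)$. Along the boundary arc labeled by $L_j$ it carries finitely many attached stable pearly tree maps. Every leaf input of these trees is the bounding cochain $b_j$, which by Definition~\ref{def:bounding-cochain-positive} is a combination of positive-action double-point generators. Expanding $b_j=\sum_x\lambda^{(j)}_x x_+$ multilinearly, each configuration contributes its signed count times the product of the coefficients $\lambda^{(j)}_x$ of its leaf generators. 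The goal is to show that after the degeneration results the surviving configurations are exactly the polygons counted by $m_d^{\mathrm{CDRGG}}$. These are polygons with boundary punctures at positive-action double points $(x^j_a)_+$ on the $L_j$-arcs, weighted by $\prod\eps_j(\overline{x^j_a})$.

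\textbf{Step 1 (d=1).} For the differential this is immediate from Theorem~\ref{thm:bimod-single-strip}. Every rigid broken trajectory contributing to $n_{k,l}(b_1,\ldots,b_1,c^{\rightarrow},b_0,\ldots,b_0)$ is a single unbroken strip. Each attached pearly tree is a trivial single-edge tree with constant gradient trajectory, so its root is its unique leaf, a positive-action double point $(x)_+$. The strip therefore has a boundary marked point mapping to a double point with a branch jump. This is the same as a strip with a boundary puncture asymptotic to that double point in the sense of \cite{cdrgg17}. The coefficient is $\prod\lambda_x=\prod\eps(\bar x)$ by the definition of $\eps_b$ in Proposition~\ref{prop:bc-aug}. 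Summing over $k,l$ gives precisely the formula for $m_1^{\mathrm{CDRGG}}$. We must also check one point about degrees. $\eps_b$ vanishes on Reeb chords of nonzero degree, while $b$ lives in $\CW^1$ and so only has degree-one double-point components, which correspond to degree-zero chords. Hence the two weightings agree term by term.

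\textbf{Step 2 (general d).} Next I would prove the analogue of Theorem~\ref{thm:bimod-single-strip} for the polygon moduli spaces of \cite[Section~5.7]{gao17}. Step 1 of that proof goes through verbatim. By the inductive energy argument of Theorem~\ref{thm:pearly-single-disk}, every edge of every attached tree carries a positive-action double point and all gradient trajectories are constant. The configuration is then nodal: one polygon plus $m$ disk components. Each disk is attached at positive-action double points on the arc of a single Lagrangian. The product decomposition of the moduli space and the index additivity of Step~2 give total virtual dimension $\dim(\text{polygon})+\sum\dim(D_i)$. The polygon count is computed with the degree constraint $|c_0|-\sum|c_j|-\sum|x|=2-(d+\#\text{punctures})$, and the disk counts with \eqref{eq:pearly-degree}. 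As in \eqref{eq:nodal-configuration}, each extra component raises the expected dimension by one relative to the global constraint, so rigidity forces $m=0$. Floer breakings along the strip-like ends are excluded for the same reason. This reduces $m_d^{\mathrm{Gao}}$ to a sum over single perturbed polygons with double-point corners weighted by $\prod\lambda$. That is the definition of $m_d^{\mathrm{CDRGG}}$ in \cite[Section~6.2]{cdrgg17}.

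\textbf{Main obstacle.} The hard part is not the combinatorics but matching the analytic data. Gao's polygons are built with his Floer data and consistent perturbations. A marked point carrying a constant trivial tree is a priori a marked point lying on the preimage of a double point, not a puncture with a switch. I would first show that the relevant moduli spaces are canonically identified, including orientations and signs. The approach is to regard the trivial-tree marked points as boundary punctures with branch jumps, for which the linearized operators coincide. The perturbation data can then be chosen compatibly, taking the Hamiltonian-perturbed Floer data of \cite{cdrgg17} as the input to Gao's construction, so that the two counts agree on the nose. Where exact matching of perturbations is not available, I would fall back on a homotopy of Floer data and argue that the equality holds up to this canonical choice. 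This is the step I expect to require the most care.
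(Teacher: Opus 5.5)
Your proposal follows the paper's proof essentially step for step. You identify the chord modules, obtain the $d=1$ case from Theorem~\ref{thm:bimod-single-strip} together with $\lambda_x=\eps_b(\bar{x})$ from Proposition~\ref{prop:bc-aug}, and handle $d\ge 2$ by the polygon analogue of the same energy-and-degree degeneration argument.

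The analytic matching you single out as the main obstacle (Floer data, signs, marked points versus switching punctures) is not addressed in the paper. The paper simply identifies the trivial-tree marked points, whose matching condition with a double-point generator already encodes the branch jump, with the double-point punctures of \cite{cdrgg17}. Note also that your fallback via a homotopy of Floer data would only give agreement up to quasi-isomorphism, not the on-the-nose coincidence the statement asserts.
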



\begin{proof}
	As noted in Section~\ref{subsec:setup}, the underlying graded modules
	coincide: the graded module $\CW^*((L_0,b_0),(L_1,b_1))$ is generated by time-one
	Hamiltonian chords from $i_0(L_0)$ to $i_1(L_1)$, which are exactly the
	generators of
	$\CW^*((L_0,\eps_0),(L_1,\eps_1))$.
	It remains to compare the $A_{\infty}$-operations.
	
	We first compare the differentials and show $m_1^{\mathrm{Gao}} = m_1^{\mathrm{CDRGG}}$. By Theorem~\ref{thm:bimod-single-strip},
	since all inputs of $b_0$ and $b_1$ are positive-action
	double-point generators, each term $n_{k,l}(b_1^{\otimes l},
	c^{\rightarrow}, b_0^{\otimes k})$ in the sum defining
	$m_1^{\mathrm{Gao}}$ counts a single holomorphic strip with $k$
	boundary marked points on $\mathbb{R} \times \{0\}$ at
	positive-action double-point generators $(x^0_1)_+, \ldots,
	(x^0_k)_+$ of $L_0$, and $l$ boundary marked points on
	$\mathbb{R} \times \{1\}$ at positive-action double-point generators
	$(x^1_1)_+, \ldots, (x^1_l)_+$ of $L_1$. The coefficient of each
	such generator $(x^j_a)_+$ in the bounding cochain $b_j$ is
	$\lambda_{x^j_a}$, which by the proof of Proposition~\ref{prop:bc-aug} equals
	the augmentation value $\eps_j(\overline{x^j_a})$. Therefore each
	term contributes
	\[
	\#\M\bigl(c^{\leftarrow};\, (x^0_k)_+, \ldots, (x^0_1)_+,\,
	c^{\rightarrow},\, (x^1_1)_+, \ldots, (x^1_l)_+\bigr)
	\cdot \prod_a \eps_0(\overline{x^0_a})
	\cdot \prod_b \eps_1(\overline{x^1_b})
	\]
	to the count of strips with output $c^{\leftarrow}$. Summing over
	all $k, l \geq 0$ and all collections of double-point generators
	gives
	\begin{align*}
	&m_1^{\mathrm{Gao}}(c^{\rightarrow})
	= \\
	&\sum_{c^{\leftarrow}}
	\sum_{\substack{x^0_1, \ldots, x^0_k \\ x^1_1, \ldots, x^1_l}}
	\#\M\bigl(c^{\leftarrow};\, (x^0_k)_+, \ldots, (x^0_1)_+,\,
	c^{\rightarrow},\, (x^1_1)_+, \ldots, (x^1_l)_+\bigr)
	\cdot \prod_a \eps_0(\overline{x^0_a})
	\cdot \prod_b \eps_1(\overline{x^1_b}) \cdot c^{\leftarrow},
	\end{align*}
	which equals $m_1^{\mathrm{CDRGG}}(c^{\rightarrow})$ by definition,
	as required.
	
	A similar argument applies to the higher operations. Namely,
	Theorem~\ref{thm:pearly-single-disk} and the analogous polygon version
	of Theorem~\ref{thm:bimod-single-strip} imply that, when the inserted
	bounding cochains are supported on positive-action double points, (the polygon
	version of) the Floer trajectories appearing in $m_d^{\mathrm{Gao}}$ reduce to single
	Hamiltonian-perturbed holomorphic polygons with additional boundary
	punctures at positive-action double-point generators. The coefficients
	of these insertions are again identified with the corresponding
	augmentation values by Proposition~\ref{prop:bc-aug}. Hence each
	summand in $m_d^{\mathrm{Gao}}$ agrees with the corresponding summand
	in $m_d^{\mathrm{CDRGG}}$ for every $d\geq 1$.
\end{proof}



Remark~\ref{rmk:immersed-wrapped-fukaya-category} explains why we use the
immersed wrapped Fukaya category $\Wim(W)$ defined by \cite{gao17} as the
ambient $A_\infty$-category for the generation result in
Section~\ref{subsec:generation}. Together with
Theorem~\ref{thm:floer-equivalence}, the existence of this category gives the
following:

\begin{corollary}
	\label{cor:immersed-fukaya}
	The wrapped Fukaya category of exact Lagrangian immersions $(L, b)$
	equipped with bounding cochains supported on positive-action double
	points, whose Floer theory is defined via Hamiltonian perturbed
	Floer theory, is well-defined. Moreover, its $A_\infty$-operations
	coincide with those of \cite{gao17} and \cite{cdrgg17}.
\end{corollary}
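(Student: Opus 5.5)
The plan is to obtain the category by transport of structure from $\Wim(W)$, so that nothing about coherent Hamiltonian perturbations has to be proved directly. First I fix the collection of objects: pairs $(L,b)$ with $L$ a graded, nice exact Lagrangian immersion with cylindrical end, and $b$ a bounding cochain supported on positive-action double points in the sense of Definition~\ref{def:bounding-cochain-positive}. By Proposition~\ref{prop:bc-aug}, each such $b$ corresponds to a unique augmentation $\eps_b$ of $\CE(L^+)$. So this collection is the same as the collection of augmented immersions $(L,\eps)$ that \cite{cdrgg17} work with. Let $\mathcal{A}\subset\Wim(W)$ be the full subcategory on these objects. It is an honest $A_\infty$-category by \cite{gao17}, since full subcategories inherit the $A_\infty$-relations.

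Next I define the "Hamiltonian-perturbed" category $\mathcal{B}$ on the same objects. Its morphism spaces are $\CW^*((L_0,\eps_0),(L_1,\eps_1))$, and its operations are the $m_d^{\mathrm{CDRGG}}$ of \cite{cdrgg17}, built from the perturbation data that Gao's construction supplies on the strip and polygon parts. By the discussion at the end of Section~\ref{subsec:setup}, the underlying graded modules of $\mathcal{A}$ and $\mathcal{B}$ coincide. Theorem~\ref{thm:floer-equivalence} then gives $m_d^{\mathrm{Gao}}=m_d^{\mathrm{CDRGG}}$ for every $d\ge1$ and every tuple of objects, once $b_j$ is identified with $\eps_{b_j}$. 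Hence the identity on objects and morphisms is a strict isomorphism of $A_\infty$-structures. In particular the $A_\infty$-relations for $m_d^{\mathrm{CDRGG}}$ follow from those of $\mathcal{A}$. This gives well-definedness, and it also gives the "moreover" clause, since both operation sets coincide by construction. Two further points I would record:
\begin{itemize}
\item The case $d=0$, i.e.\ curvature, vanishes on both sides: on Gao's side by the Maurer--Cartan equation, on the CDRGG side by the augmentation equation, and these are matched by Proposition~\ref{prop:bc-aug}.
\item Embedded objects with $b=0$ recover $\cW(W)$ on both sides.
\end{itemize}

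The main obstacle is making sure the perturbation data used for $m_d^{\mathrm{CDRGG}}$ are the same data that enter Gao's definition, so that the comparison in Theorem~\ref{thm:floer-equivalence} is a genuine equality of counts rather than an equality up to some continuation map. I would handle this by \emph{defining} the Hamiltonian-perturbed Floer data to be the restriction of Gao's consistent universal choice to the polygon components. By Theorem~\ref{thm:pearly-single-disk} and Theorem~\ref{thm:bimod-single-strip}, only those polygon components survive, carrying double-point punctures weighted by $\eps$. If independence of choices is also desired, I would note that different consistent choices give quasi-equivalent categories on Gao's side, and this transfers to $\mathcal{B}$ through the identity isomorphism. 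Finally, independence of the nice perturbation up to safe isotopy follows from the remark after Proposition~\ref{prop:bc-aug}.
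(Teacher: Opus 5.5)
Your proposal is correct and follows essentially the same route as the paper. Both take the full subcategory of $\Wim(W)$ on pairs $(L,b)$ with $b$ supported on positive-action double points, which is an $A_\infty$-category by \cite{gao17}, and use Theorem~\ref{thm:floer-equivalence} to identify its operations with $m_d^{\mathrm{CDRGG}}$, so that the $A_\infty$-relations transfer along the identity. Your extra remarks are refinements the paper leaves implicit: matching the curvature via Proposition~\ref{prop:bc-aug}, and fixing the Hamiltonian-perturbed data as the restriction of Gao's so the comparison is a literal equality of counts. One caveat, shared with the paper: you invoke Theorem~\ref{thm:floer-equivalence} for every tuple of objects, but its transversality hypothesis excludes repeated objects, whose endomorphism complexes in $\Wim(W)$ are the pearly model $\CW^*(L)$, so that case is not literally covered in either write-up.
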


\begin{proof}
	The full subcategory of the immersed wrapped Fukaya category
	$\Wim(W)$ spanned by pairs $(L,b)$, where $b$ is a bounding cochain
	supported on positive-action double points, is well-defined by
	\cite{gao17}. Although its $A_\infty$-operations are defined via stable
	pearly tree maps, Theorem~\ref{thm:floer-equivalence} shows that these
	operations agree with the corresponding Hamiltonian-perturbed polygon
	operations of \cite{cdrgg17}.
\end{proof}

\section{The generation theorem}
\label{subsec:generation}

The purpose of this section is to adapt the generation argument of
\cite{cdrgg17} to the immersed wrapped Fukaya category $\Wim(W)$. Using the
identification established in Corollary~\ref{cor:immersed-fukaya}
between the $A_\infty$-operations of \cite{gao17} and those of \cite{cdrgg17}, we show
that the generation result extends to exact Lagrangian immersions
equipped with bounding cochains supported on positive-action double
points. While the overall strategy follows that of \cite{cdrgg17}, several
modifications are required to account for the immersed setting.

\begin{theorem}
	\label{thm:immersed-generation}
	Let $L$ be an exact Lagrangian immersion in a Weinstein manifold
	$W$ of finite type, equipped with a bounding cochain $b$ supported
	on the positive-action double points of $L$. Then $(L, b)$ is
	generated by the Lagrangian cocores of $W$ in $\Wim(W)$.
\end{theorem}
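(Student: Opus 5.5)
The plan is to run the Chantraine--Dimitroglou Rizell--Ghiggini--Golovko argument inside $\Wim(W)$. Corollary~\ref{cor:immersed-fukaya} lets every operation be computed by Hamiltonian-perturbed polygons, with positive-action double-point punctures weighted by $\eps_b$. The overall strategy has four steps.

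\textbf{Step 1: the Weinstein handle decomposition.} Choose a Weinstein handle decomposition of $W$ with critical cocores $D_1,\dots,D_N$. By a small safe isotopy, which preserves $b$ via the remark following Proposition~\ref{prop:bc-aug}, arrange that $L$ meets the skeleton transversely. Arrange also that the double points of $L$ lie away from the cores of the top-index handles, and that $L$ intersects each $D_j$ transversely away from its double points.

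\textbf{Step 2: the Lagrangian surgery cone.} Use the Liouville flow to push $L$ so that, in a neighbourhood of the subcritical skeleton, $L$ becomes conical, and near each index-$n$ core $L$ looks like a union of parallel copies of the cocores. Then perform Lagrangian surgery of $L$ with the cocores at the intersection points. This can equivalently be phrased as in \cite{cdrgg17}: the wrapped Floer complex of $(L,b)$ against a test object is identified with the mapping cone of the surgery.

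Concretely, follow \cite{cdrgg17} and construct a twisted complex
\[
\mathcal C=\bigl(\textstyle\bigoplus_{p} D_{j(p)}[\deg p],\ \delta\bigr),
\]
indexed by the intersection points $p\in L\cap\bigcup_j D_j$. Build a closed morphism $\Phi\colon \mathcal C\to (L,b)$ whose components are the Floer generators $p$ themselves. Here $\delta$ counts perturbed holomorphic polygons with corners at intersection points, and these polygons may carry additional corners at positive-action double points weighted by $\eps_b$.

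\textbf{Step 3: $\mathcal C$ is a twisted complex and $\Phi$ is closed.} This follows from the $A_\infty$-relations in $\Wim(W)$, which hold by \cite{gao17} and are transferred by Theorem~\ref{thm:floer-equivalence}. The point is that the Maurer--Cartan condition on $b$ kills the extra boundary contributions coming from breaking at double points. Lower-triangularity of $\delta$ comes from the action filtration: the energy identity \eqref{eq:energy}, together with positivity of the double-point actions, shows that inserting $b$ only lowers action. So the filtration of \cite{cdrgg17} by the action of intersection points survives.

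\textbf{Step 4: $\Phi$ is a quasi-isomorphism.} Show that $\mathrm{Cone}(\Phi)$ is acyclic by proving that
\[
\CW^*\bigl(T,\mathrm{Cone}(\Phi)\bigr)\simeq 0
\]
for all test objects $T$, or more efficiently by showing $\hom(\mathrm{Cone}(\Phi),\mathrm{Cone}(\Phi))\simeq 0$ as in \cite{cdrgg17}.

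The key input is that after wrapping, the complement of the cocores has vanishing wrapped Floer cohomology, since it displaces into the subcritical part. In \cite{cdrgg17} this is established by a Viterbo-type transfer or stop-removal argument, using a Hamiltonian isotopy that pushes $L$ entirely off the skeleton into the cylindrical end, where its self-wrapped Floer complex is computed by a filtered complex with acyclic associated graded. I would adapt this by applying the Hamiltonian isotopy to $(L,b)$ itself. A Hamiltonian isotopy of an immersion is safe, so by the remark following Proposition~\ref{prop:bc-aug} the bounding cochain $b$ is carried along, and it produces an isomorphic object in $\Wim(W)$.

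\textbf{Main obstacle.} The delicate step is the filtration and acyclicity argument of Step 4 in the presence of double points. Two things must be checked: that pearly trees do not reintroduce negative-action terms, and that the associated graded of the action filtration still computes the embedded surgery picture. For the first, I would invoke Theorem~\ref{thm:pearly-single-disk} and Theorem~\ref{thm:bimod-single-strip}, and their polygon analogues, which rule out negative-action terms. For the second, I would isolate the contributions of the double points, which occur at bounded positive action, by choosing the stretching parameter of the Liouville flow large enough that the intersection-point actions dominate all double-point actions of $L$. This is possible because $L$ has only finitely many double points.
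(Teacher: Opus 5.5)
There is a genuine gap, and it is exactly where the immersed case differs from the embedded one. In Step 3 you claim that $\mathcal C$ is a twisted complex and $\Phi$ is closed ``by the $A_\infty$-relations and the Maurer--Cartan equation for $b$''. The $A_\infty$-relations do not make an a priori chosen $\delta$ satisfy the Maurer--Cartan equation, and they do not make $\Phi$ closed. These are equations in the unknown coefficients of $\delta$ and $\Phi$, and they have to be solved.

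In the paper, following \cite{cdrgg17}, this is the existence of an augmentation $\eps$ of $\CE(\mathbb{L}^+)$, where $\mathbb{L}=D_{a_1}\cup\cdots\cup D_{a_k}\cup L$. It must satisfy $\eps=\eps_b$ on the double-point chords $\bar x_l$, $\eps(\bar a_i)=1$ (your ``components of $\Phi$ are the generators themselves''), and $\eps=0$ on order-reversing chords. The entries of $\Phi$ are then $\eps(\bar a_i),\eps(\bar b^m_{ij})$, and the entries of $\delta$ are $\eps(\bar c^m_{ij})$. Constructing this augmentation is the new content of the proof. The bottom of the filtration is now $\mathcal A_{k+1}=\CE(L^+)$ rather than the ground field, so the coefficients $\alpha^m_j,\beta^{mh}_{jl}$ in $d\bar b^m_{ij}$ lie in $\CE(L^+)$. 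One sets $\eps'(\bar b^m_{ij})=0$ and defines, by induction on $<_i$,
$$\eps'(\bar c^m_{ij})=-\eps'(\alpha^m_j)-\sum_{(h,l)<_i(j,m)}\eps'(w^{ml}_{jh})\,\eps'(\bar c^l_{ih}).$$
This is precisely the closedness equation for $\Phi$ on $\bar b^m_{ij}$. One then checks $\eps'(d\bar c^m_{ij})=0$, which is the twisted-complex equation, by applying $\eps'$ to $d^2\bar b^m_{ij}=0$. That check uses $d\bar a_i=0$ (the chords $\bar a_i$ are short, by \eqref{eq:action-inequality}) and the fact that $\bar a_j\bar c^m_{ij}$ has coefficient $1$. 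Your proposal has no substitute for this step: ``$\delta$ counts perturbed holomorphic polygons'' is not a definition for which these equations hold automatically.

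Step 4 is also misdirected. You propose to Hamiltonian-isotope $(L,b)$ itself off the skeleton, but that is impossible in general. $L$ meets the skeleton transversely at the $a_i$, and if it could be pushed off, $(L,b)$ would itself be zero; a cotangent fiber of $T^*Q$, for instance, is not. The object that is disjoint from the skeleton is the surgery $\mathbb{L}(a_1,\dots,a_k)$, because resolving $\mathbb{L}$ at the $a_i$ removes exactly the intersection points with the skeleton.

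The actual argument has two parts. First, $(\mathbb{L}(a_1,\dots,a_k),\bar\eps)$ is quasi-isomorphic to the twisted complex built from the $D_{a_i}$ and $(L,\eps_b)$, which is your $\mathrm{Cone}(\Phi)$, by \cite[Proposition~8.16]{cdrgg17}; this needs the augmentation above. Second, it is the zero object by \cite[Proposition~7.6]{cdrgg17}, since it is disjoint from the skeleton. The attribution to a Viterbo transfer or stop-removal argument is not what \cite{cdrgg17} does. Your stretching and associated-graded analysis in the ``main obstacle'' paragraph is therefore beside the point. Finally, the twisted complex should be indexed by $L\cap\operatorname{Skel}(W)$, with one copy of the cocore $D_{a_i}$ for each point $a_i$, not by $L\cap\bigcup_j D_j$.
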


The embedded case is \cite[Theorem~1.1]{cdrgg17}. By
Proposition~\ref{prop:bc-aug} and Theorem~\ref{thm:floer-equivalence},
working with $(L, b)$ in the framework of \cite{gao17} is equivalent to working
with $(L, \eps_b)$ in the framework of \cite{cdrgg17}. The proof follows the
strategy of \cite{cdrgg17} for the most part. We omit the details
that can be found there and focus on the points where the immersed
setting requires additional care.

\begin{proof}[Proof of Theorem \ref{thm:immersed-generation}]
	By Proposition~\ref{prop:bc-aug}, the bounding cochain $b$
	corresponds to an augmentation $\eps_b \colon \CE(L^+) \to k$, and
	by Theorem~\ref{thm:floer-equivalence} the Floer theory of $(L, b)$
	in the framework of \cite{gao17} is canonically identified with the Floer theory
	of $(L, \eps_b)$ in the framework of \cite{cdrgg17}. We therefore carry out the
	argument using the language of augmentations throughout.
	
	Let $x_1, \ldots, x_n$ denote the double points of $L$. We perturb
	$L$ so that it intersects the skeleton of $W$ transversely in
	finitely many points $a_1, \ldots, a_k$, all distinct from
	$x_1, \ldots, x_n$. Let $D_{a_i}$ denote the Lagrangian cocore at
	$a_i$; after a grading shift we may assume $|a_i| = 1$ as a
	morphism from $D_{a_i}$ to $L$. Set
	\[
	L_i := D_{a_i} \quad (i = 1, \ldots, k), \qquad
	L_{k+1} := L, \qquad
	\mathbb{L} := L_1 \cup \cdots \cup L_{k+1}.
	\]
	
	By \cite[Proposition~8.16]{cdrgg17}, if $\CE(\mathbb{L}^+)$ admits
	an augmentation $\eps$ satisfying:
	\begin{enumerate}
		\item $\eps(\bar{x}_l) = \eps_b(\bar{x}_l)$ for
		$l = 1, \ldots, n$,
		\item $\eps(\bar{a}_i) = 1$ for $i = 1, \ldots, k$,
		\item $\eps(\bar{q}) = 0$ for any Reeb chord $\bar{q}$ from
		$L_i^+$ to $L_j^+$ with $i > j$ and $f_i(q) > f_j(q)$,
	\end{enumerate}
	then, denoting by $\mathbb{L}(a_1, \ldots, a_k)$ the immersed exact
	Lagrangian obtained from $\mathbb{L}$ by Lagrangian surgery along
	$a_1, \ldots, a_k$ as in \cite[Section~8.2]{cdrgg17}, the object
	$(\mathbb{L}(a_1, \ldots, a_k), \bar{\eps})$ is quasi-isomorphic to
	a twisted complex built from $D_{a_1}, \ldots, D_{a_k}$ and
	$(L, \eps_b)$. Since $\mathbb{L}(a_1, \ldots, a_k)$ is disjoint
	from the skeleton of $W$, it is equivalent to the zero object by
	\cite[Proposition~7.6]{cdrgg17}. Consequently, $(L, b)$ is
	quasi-isomorphic to a twisted complex built from the cocores
	$D_{a_1}, \ldots, D_{a_k}$ in $\Wim(W)$.
	
	It therefore remains to construct an augmentation $\eps$ of
	$\CE(\mathbb{L}^+)$ satisfying (1)--(3). When $L$ is embedded this
	is \cite[Lemmas~9.4 and~9.5]{cdrgg17}. We extend their inductive
	construction to the immersed setting.
	
	\medskip\noindent\textbf{Setup.}
	For an embedded $L$, \cite{cdrgg17} took suitable specific
	Hamiltonian perturbations of the cocores $D_{a_i}$ and $L$. By
	abuse of notation, we let $L$ denote the Lagrangian after
	perturbation. Let $D_{a_i}^w$ denote the cocore disks after
	perturbation (the superscript $w$ emphasises that the perturbation
	can be viewed as a wrapping), and set
	\[
	\mathbb{L}^w := D_{a_1}^w \cup \cdots \cup D_{a_k}^w \cup L.
	\]
	By taking specific perturbations, \cite{cdrgg17} arranged that near
	each $a_i$ the Lagrangian $L$ looks like a cocore $D_{a_i}$, and
	that every Reeb chord of $(\mathbb{L}^w)^+$ is of one of the
	following types:
	\begin{enumerate}[label=(\alph*)]
		\item chords $\bar{a}_i$ from $(D_{a_i}^w)^+$ to $L^+$, for
		$i = 1, \ldots, k$, of arbitrarily small length $\epsilon > 0$;
		\item chords $\bar{b}^m_{ij}$ from $(D_{a_i}^w)^+$ to $L^+$,
		for $1 \leq i < j \leq k$ and $1 \leq m \leq m_0(i,j)$ for
		some $m_0(i,j) \geq 0$;
		\item chords $\bar{c}^m_{ij}$ from $(D_{a_i}^w)^+$ to
		$(D_{a_j}^w)^+$, for $1 \leq i < j \leq k$ and
		$1 \leq m \leq m_0(i,j)$;
		\item order-reversing chords: chords from $L^+$ to
		$(D_{a_i}^w)^+$, or from $(D_{a_i}^w)^+$ to $(D_{a_j}^w)^+$
		with $i > j$.
	\end{enumerate}
	For an immersed $L$, we take the same perturbation. Every Reeb
	chord of $(\mathbb{L}^w)^+$ is then either of one of the above
	four types, or of the following additional type:
	\begin{enumerate}[label=(\alph*)]
		\setcounter{enumi}{4}
		\item chords $\bar{x}_l$ from $L^+$ to $L^+$ corresponding to
		the double points $x_l$ of $L$, for $l = 1, \ldots, n$.
	\end{enumerate}
	Let $\mathcal{I}$ denote the bilateral ideal generated by
	order-reversing chords. Since $\mathcal{I}$ is preserved by the
	differential, the quotient algebra
	\[
	\mathcal{A} := \CE\bigl((\mathbb{L}^w)^+\bigr)/\mathcal{I}
	\]
	inherits a differential. Moreover, there is a filtration
	\[
	\CE(L^+) =: \mathcal{A}_{k+1} \subset \mathcal{A}_k \subset
	\cdots \subset \mathcal{A}_1 = \mathcal{A},
	\]
	where $\mathcal{A}_i$ is generated by $\bar{a}_s$, $\bar{b}^m_{sj}$,
	$\bar{c}^m_{sj}$ with $s \geq i$, together with all
	$\bar{x}_1, \ldots, \bar{x}_n$. In particular, $\mathcal{A}_{k+1}$
	is generated by $\bar{x}_1, \ldots, \bar{x}_n$; in the embedded
	case $\mathcal{A}_{k+1} = k$. The differential on $\mathcal{A}$
	respects this filtration since every summand of the differential
	of an order-respecting chord is a product of order-respecting
	chords.
	
	The action $a(\bar{p}) = f(p^+) - f(p^-)$ of a Reeb chord $\bar p$
	satisfies the energy inequality~\eqref{eq:energy}: whenever
	$d\bar{p} = \lambda\ \bar{p}_1 \cdots \bar{p}_r + \cdots$ for some
	nonzero $\lambda \in k$, we have
	\begin{equation}
		\label{eq:action-inequality}
		a(\bar{p}) > a(\bar{p}_1) + \cdots + a(\bar{p}_r).
	\end{equation}
	For each fixed index $i$, this induces a total ordering $<_i$ on
	pairs $(j,m)$ by declaring
	\[
	(h,l) <_i (j,m) \quad \text{if and only if} \quad
	a(\bar{c}^l_{ih}) < a(\bar{c}^m_{ij}).
	\]
	The actions of $\bar{b}^m_{ij}$ and $\bar{c}^m_{ij}$ can be made
	pairwise distinct for all $i,j,m$ and sufficiently close to one
	another for each fixed $i,j,m$. The Reeb chords of type (e) can
	be assumed to have positive action by a small perturbation of $L$
	near its double points.
	
	With respect to the ordering $<_i$, the differentials take the
	form
	\begin{align}
		d\bar{a}_i &= 0, \label{eq:diff-a}\\
		d\bar{b}^m_{ij} &=
		\alpha^m_j \bar{a}_i
		+ \sum_{(h,l) <_i (j,m)} \beta^{mh}_{jl}\, \bar{b}^l_{ih}
		+ \bar{a}_j \bar{c}^m_{ij}
		+ \sum_{(h,l) <_i (j,m)} w^{ml}_{jh}\, \bar{c}^l_{ih},
		\label{eq:diff-b}\\
		d\bar{c}^m_{ij} &=
		\sum_{(h,l) <_i (j,m)} \widetilde{w}^{ml}_{jh}\,
		\bar{c}^l_{ih}, \label{eq:diff-c}
	\end{align}
	where $\alpha^m_j, \beta^{mh}_{jl} \in \CE(L^+)$ and
	$w^{ml}_{jh}, \widetilde{w}^{ml}_{jh} \in \mathcal{A}_{i+1}$.
	Since $a(\bar{a}_i) = \epsilon$ can be made arbitrarily small
	by shifting the potentials of $D_{a_i}$,
	equation~\eqref{eq:action-inequality} implies $d\bar{a}_i = 0$.
	The coefficient of $\bar{a}_j \bar{c}^m_{ij}$ equals $1$ by the
	controlled Hamiltonian perturbations; see
	\cite[Lemma~9.4]{cdrgg17} for details. The differential preserves
	$\mathcal{A}_{k+1}$, so the differentials of type (e) chords are
	products of type (e) chords. The only difference from the embedded
	case is that $\alpha^m_j, \beta^{mh}_{jl}$ lie in $\CE(L^+)$
	rather than in $k$.
	
	\medskip\noindent\textbf{Inductive construction.}
	Define an augmentation $\eps'$ of $\mathcal{A}$ inductively. On
	$\mathcal{A}_{k+1} = \CE(L^+)$, set
	\[
	\eps'(\bar{x}_l) := \eps_b(\bar{x}_l), \qquad l = 1, \ldots, n.
	\]
	Assume $\eps'$ is defined on $\mathcal{A}_{i+1}$. Extend it to
	$\mathcal{A}_i$ by setting
	\[
	\eps'(\bar{a}_i) = 1, \qquad
	\eps'(\bar{b}^m_{ij}) = 0 \quad \text{for all } j, m,
	\]
	and defining $\eps'(\bar{c}^m_{ij})$ by induction on
	$<_i$:
	\[
	\eps'(\bar{c}^m_{ij})
	= -\eps'(\alpha^m_j)
	- \sum_{(h,l) <_i (j,m)}
	\eps'(w^{ml}_{jh})\, \eps'(\bar{c}^l_{ih}).
	\]
	It is straightforward to verify that $\eps' \circ d = 0$ on all
	generators except possibly $\bar{x}_l$ and $\bar{c}^m_{ij}$.
	Since $\eps_b$ is an augmentation of $\CE(L^+)$ by
	Proposition~\ref{prop:bc-aug}, we have $\eps'(d\bar{x}_l) = \eps_b(d\bar{x}_l) =0$
	for all $l = 1, \ldots, n$. It remains to check $\bar{c}^m_{ij}$.
	Assume the statement holds on $\mathcal{A}_{i+1}$ and for all
	$\bar{c}^l_{ih}$ with $(h,l) <_i (j,m)$, and consider
	\[
	0 = d(d\bar{b}^m_{ij})
	= d\!\left(\alpha^m_j \bar{a}_i
	+ \sum_{(h,l) <_i (j,m)} \beta^{mh}_{jl}\, \bar{b}^l_{ih}
	+ \bar{a}_j \bar{c}^m_{ij}
	+ \sum_{(h,l) <_i (j,m)} w^{ml}_{jh}\, \bar{c}^l_{ih}\right).
	\]
	Since $\eps'(d\alpha^m_j) = \eps'(d\beta^{mh}_{jl}) = 0$,
	$\eps'(\bar{a}_j) = 1$, $\eps'(d\bar{b}^l_{ih}) = 0$, and
	$d\bar{a}_i = 0$, applying $\eps'$ yields
	\[
	0 = \eps'(d\bar{c}^m_{ij})
	+ \sum_{(h,l) <_i (j,m)}
	\eps'\!\left(d(w^{ml}_{jh}\, \bar{c}^l_{ih})\right).
	\]
	The induction hypothesis gives
	$\eps'(d(w^{ml}_{jh}\, \bar{c}^l_{ih})) = 0$, so
	$\eps'(d\bar{c}^m_{ij}) = 0$, completing the induction. Therefore
	$\eps'$ is an augmentation of $\mathcal{A}$.
	
	\medskip\noindent\textbf{Conclusion.}
	Precomposing $\eps'$ with the projection $\CE(\mathbb{L}^+)
	\twoheadrightarrow \mathcal{A}$ yields an augmentation $\eps$ of
	$\CE(\mathbb{L}^+)$ satisfying (1)--(3). By
	Proposition~\ref{prop:bc-aug}, $\eps$ corresponds to a bounding
	cochain in the framework of \cite{gao17}. The twisted complex built from
	$D_{a_1}, \ldots, D_{a_k}$ and $(L, b)$ is then quasi-isomorphic
	to $(\mathbb{L}(a_1, \ldots, a_k), \bar{\eps})$, which is
	equivalent to the zero object since $\mathbb{L}(a_1, \ldots, a_k)$
	is disjoint from the skeleton of $W$. Hence $(L, b)$ is generated
	by the cocores $D_{a_1}, \ldots, D_{a_k}$ in $\Wim(W)$.
\end{proof}

\begin{remark}
	The hypothesis that $b$ is supported on positive-action double
	points is equivalent, by Proposition~\ref{prop:bc-aug}, to the
	existence of an augmentation $\eps_b \colon \CE(L^+) \to k$ of
	the Chekanov--Eliashberg algebra of the Legendrian lift $L^+$.
	This is precisely the condition used by \cite{cdrgg17} to
	define Floer theory for immersed Lagrangians, so
	Theorem~\ref{thm:immersed-generation} can be equivalently stated
	as: any augmented immersed Lagrangian $(L, \eps)$ in the sense of
	\cite{cdrgg17} is generated by the Lagrangian cocores of $W$ in $\Wim(W)$.
\end{remark}

\begin{remark}
	If $L$ is not nice in the sense of Remark~\ref{rmk:nice-immersion}, 
	one first perturbs $L$ by a Lagrangian isotopy to a nice immersion $L'$, 
	equips $L'$ with a positive-action bounding cochain $b'$, and applies 
	Theorem~\ref{thm:immersed-generation} to $(L', b')$. The original object 
	$(L, b)$, obtained by transporting $(L', b')$ back along the isotopy, 
	is then generated by the Lagrangian cocores.
\end{remark}

\begin{remark}
	As in \cite[Remark~1.3]{cdrgg17}, exact Lagrangian immersions can
	be enriched with additional structure such as local systems,
	gradings, or spin structures, and the same arguments carry over.
	Moreover, the framework of \cite{gao17} can be enriched with local
	systems following the standard procedure in Floer theory, where the
	$A_\infty$-operations are twisted by holonomy weights along the
	boundaries of the relevant holomorphic curves. Since the
	degeneration results of Section~\ref{subsec:rigidity} depend only
	on the energy inequality and the degree
	constraints,
	they carry over unchanged to this enriched setting. In particular,
	Theorem~\ref{thm:immersed-generation} holds for exact Lagrangian
	immersions equipped with local systems.
\end{remark}

The generation theorem has the following immediate consequence.

\begin{corollary}\label{cor:embedded-fukaya}
	Let $(L, b)$ be an exact Lagrangian immersion equipped with a
	bounding cochain supported on positive-action double points. Then
	$(L, b)$ is an object of the triangulated closure $\mathrm{Tw}\, \cW(W)$ of the embedded wrapped Fukaya category $\cW(W)$ appearing in
	\cite{cdrgg17} or \cite{gps1}.
\end{corollary}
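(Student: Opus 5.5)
The plan is to deduce Corollary~\ref{cor:embedded-fukaya} formally from Theorem~\ref{thm:immersed-generation}, using that the cocores are embedded objects. First I will regard the embedded wrapped Fukaya category $\cW(W)$ as a full subcategory of $\Wim(W)$: an embedded exact Lagrangian with cylindrical end has no double points, so $\CW^*(L)$ has only Morse and Reeb chord generators, and $b=0$ is a bounding cochain (trivially supported on positive-action double points). The point to check is that for such objects the operations $m_d^{\mathrm{Gao}}$ agree with the usual operations of $\cW(W)$ from \cite{cdrgg17} (or \cite{gps1}, up to quasi-equivalence). With no double points and $b=0$, no pearly trees are attached to the polygons, so $m_d^{\mathrm{Gao}}$ counts Hamiltonian-perturbed polygons with ordinary boundary. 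This is the $\eps=0$ (empty) case of Theorem~\ref{thm:floer-equivalence}, which identifies these operations with those of \cite{cdrgg17}. Hence the inclusion $\cW(W)\hookrightarrow \Wim(W)$ is a cohomologically fully faithful $A_\infty$-functor, and it extends to $\mathrm{Tw}\,\cW(W)\hookrightarrow \mathrm{Tw}\,\Wim(W)$.

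Next I will apply Theorem~\ref{thm:immersed-generation}. Its proof gives more than abstract generation: it produces an explicit twisted complex built from the cocores $D_{a_1},\ldots,D_{a_k}$ and $(L,b)$ that is quasi-isomorphic to the zero object $(\mathbb{L}(a_1,\ldots,a_k),\bar\eps)$. In the one-sided twisted complex, $(L,b)$ therefore appears as a cone, up to shift, on a twisted complex $T$ whose terms are only the cocores. The maps in $T$ are given by the chords $\bar c^m_{ij}$ weighted by $\eps'$, and these are morphisms between embedded objects. So $T$ lies in $\mathrm{Tw}\,\cW(W)$, and $(L,b)\simeq T$ in $\mathrm{Tw}\,\Wim(W)$.

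Finally, by full faithfulness, $T$ defines an object of $\mathrm{Tw}\,\cW(W)$ whose image is quasi-isomorphic to $(L,b)$, which is the claim. The main obstacle I expect is the first step: checking carefully that Gao's model and the CDRGG model define the same, or at least quasi-equivalent, $A_\infty$-structures on embedded objects. If direct equality is delicate, for instance because of differing perturbation schemes or the capping data, I would instead use the standard fact that different choices of Floer data give quasi-equivalent categories. That quasi-equivalence is enough here, because the conclusion only concerns membership in a triangulated closure up to quasi-isomorphism.
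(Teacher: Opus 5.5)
Your proposal is correct and follows the paper's proof. Both arguments regard $\cW(W)$ as a full subcategory of $\Wim(W)$: the paper cites \cite[Proposition~5.22]{gao17}, or alternatively Corollary~\ref{cor:immersed-fukaya}, which is your $b=0$ argument. Both then use Theorem~\ref{thm:immersed-generation} to exhibit $(L,b)$ as quasi-isomorphic to a twisted complex of cocores, and conclude from the closure of $\mathrm{Tw}\,\cW(W)$ under twisted complexes. The one addition is that you explicitly extract the sub-twisted complex $T$ on the cocores from the acyclic complex, which the paper leaves implicit.
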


\begin{proof}
	By Theorem~\ref{thm:immersed-generation}, $(L, b)$ is generated
	by the Lagrangian cocores of $W$ in $\Wim(W)$. The cocores are
	embedded Lagrangians, hence objects of the embedded wrapped Fukaya
	category $\cW(W)$, which embeds as a full subcategory of $\Wim(W)$
	by \cite[Proposition~5.22]{gao17}, or alternatively by
	Corollary~\ref{cor:immersed-fukaya}. Since $\mathrm{Tw}\,\cW(W)$ is closed under twisted complexes, and since
	$(L,b)$ is quasi-isomorphic in $\Wim(W)$ to a twisted complex of objects of
	$\cW(W)$, it follows that $(L,b)$ belongs to $\mathrm{Tw}\,\cW(W)$.
\end{proof}

\bibliographystyle{amsalpha}
\bibliography{Bibliography}

\end{document}